\documentclass{amsart}
\pdfoutput=1
\usepackage[utf8]{inputenc} 
\usepackage{tikz-cd}

\usepackage[T1]{fontenc}    
\usepackage{ae,aecompl}     
\usepackage{url}

\usepackage{pdfsync}

\usepackage{tensor}

\input xypic

\xyoption{all}

\xyoption{arc}

\xyoption{2cell}

\usepackage{amsmath}
\usepackage{amsfonts}
\usepackage{amssymb}
\usepackage{amsthm} %
\usepackage{mathrsfs} %
\usepackage{enumerate} 
\usepackage{verbatim}	
\usepackage{dynkin-diagrams}

\usepackage{tikz}
\usepackage{tikz-cd}
\usepackage{pgfplots}
\usetikzlibrary{calc}
\usetikzlibrary{shapes}

\input xypic
\xyoption{all}
\xyoption{arc}
\xyoption{2cell}

\usetikzlibrary{intersections}
\usetikzlibrary{patterns}
\usepackage{circuitikz}

\usepackage{caption}
\usepackage[hyperfootnotes]{hyperref}

\theoremstyle{plain} 
\newtheorem{thm}{Theorem}[section]
\newtheorem{cor}[thm]{Corollary}
\newtheorem{prop}[thm]{Proposition}

\newtheorem{conject}[thm]{Conjecture}

\theoremstyle{definition}
\newtheorem{defi}[thm]{Definition}

\usepackage{color}
\definecolor{Ccolor}{rgb}{0,0.5,0}
\definecolor{Mcolor}{rgb}{1,0,0}
\definecolor{lightgray}{rgb}{0.6,0.6,0.6}

\def\weakr{\leq_{R}}

\def\sweakr{<_{R}}

\def\weakl{\leq_{L}}

\def\sweakl{<_{L}}

\def\N{\mathbb{N}}

\def\H#1{\mathcal{H}(#1)}

\author[H.~Gimenez]{Harrison Gimenez}
\address[H.~Gimenez]{Department of Mathematics \\ B26 Hayes-Healy Building \\ University of Notre Dame, Indiana 46556, U.S.A.}
\email{hgimenez@nd.edu}

\subjclass[2010]{Primary 20F55}

\title{On the Weak Right Order of Right-Angled Coxeter Systems}

\subjclass[2020]{20F55}

\begin{document}


\begin{abstract}
     Let $ (W,S)$ be a Coxeter system, and let $ w\in W$. Let $ [1,w] := \{ x\in W \mid x \weakr w \} $ where $ \weakr$ denotes the weak right order of $ (W,S)$. The element $ w$ is said to have the \emph{ancestor property} if there is a unique non-trivial involution of maximal length in the set $ [1,w]$. The ancestor property was first defined by Hart and Rowley in \cite{hart2025noteinvolutionprefixescoxeter} where they conjectured that all non-identity elements in a finite Coxeter system have the ancestor property. In an arbitrary Coxeter system $(W,S)$, we show that the ancestor property holds for any non-identity fully commutative element (see \cite{stembridge1996fully} for the definition of a fully commutative element). In particular, since any element of a right-angled Coxeter system is fully commutative, we show that the ancestor property holds for all non-identity elements of a right-angled Coxeter system. Lastly, we also provide an axiomatization of right-angled Coxeter systems as reflection systems with a reflection cocycle that obeys a certain property called the \emph{meet intersection condition}.

 \end{abstract}


 \maketitle


\section{Introduction}

Let $\N$ denote the set of non-negative integers, and let $ \N_{\geq 2}$ denote the set of non-negative integers that are greater than or equal to $2$. A function $ m:S \times S \rightarrow \mathbb{N} \cup \{ \infty \}$ is called a \emph{Coxeter matrix} if it satisfies the following three properties:

\begin{enumerate}
    \item for all $ r,s\in S$, $ m(r,s) =1$ if and only if $ r=s$

    \item for all $ r,s\in S$, $ r \neq s$ implies $ m(r,s) \in \N_{\geq 2} \cup \{ \infty \}$.

    \item for all $ r,s\in S$, $ m(r,s) = m(s,r)$ 
\end{enumerate}
A Coxeter matrix $ m: S \times S \rightarrow \N \cup \{ \infty \}$ is called a \emph{right-angled Coxeter matrix} if it saisfies a stronger version of property (2):

\begin{enumerate}
    \item[($2^{\ast}$)] for all $ r,s \in S$, $ r\neq s$ implies $m(r,s) \in \{ 2, \infty  \}$ 
\end{enumerate}
A \emph{Coxeter system} is an ordered pair $ (W,S)$ such that the following hold:
\begin{enumerate}
    \item $ S \subseteq W$

    \item $ m : S \times S \rightarrow \N \cup \{ \infty \}$ is a Coxeter matrix

    \item $ W $ is a group such that $ W \cong  \langle s\in S \mid \forall r,s\in S, \ (rs)^{m(r,s)} =1 \rangle$. If $ m(r,s) = \infty$, then we omit the relation $ (rs)^{m(r,s)} = 1$ from the presentation.
\end{enumerate}
A Coxeter system $ (W,S)$ is called a \emph{right-angled Coxeter system} if the associated Coxeter matrix $ m : S \times S \rightarrow \N \cup \{ \infty \}$ is a right-angled Coxeter matrix. The texts \cite{humphreys1992reflection} and \cite{bjorner2005combinatorics} provide a good introduction to the general theory of Coxeter systems.

Let $ (W,S)$ be a Coxeter system, and let $ \weakr $ denote its weak right order. If $ w\in W$, we define $ [1,w] : = \{ x\in W \mid x \weakr w \}$. The element $ w\in W$ is said to have the \emph{ancestor property} if there is a unique non-trivial involution of maximal length in the set $ [1,w]$. Hart and Rowley introduced the following conjecture \cite{hart2025noteinvolutionprefixescoxeter}:

\begin{conject}{(Hart, Rowley)}
    Let $ (W,S)$ be a finite Coxeter system ($|W|< \infty$). If $ w \in W$ and $ w\neq 1$, then $w$ has the ancestor property.
\end{conject}

Let $ (W,S)$ be a Coxeter system, and let $ w\in W$. Stembridge defined the notion of a \emph{fully commutative} element: $w$ is fully commutative if any reduced 
expression for $w$ can be obtained from any other by means of braid relations that only involve commuting generators \cite{stembridge1996fully}. We prove the following theorem that is related to the conjecture that was originally posed by Hart and Rowley:

\begin{thm} \label{solvedconj}
    Let $ (W,S)$ be a Coxeter system. Let $ w\in W$ be a fully commutative element such that $ w\neq 1$. Then $ w$ has the ancestor property. 
\end{thm}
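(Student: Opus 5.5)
The plan is to work entirely inside the heap of $w$. Since $w$ is fully commutative, the reduced words of $w$ correspond to the linear extensions of a labelled poset $\H{w}$, and the prefixes $x\weakr w$ correspond bijectively to the order ideals of $\H{w}$. Every such prefix is again fully commutative, with heap equal to the corresponding ideal, and length equal to the size of the ideal. This turns the statement into a combinatorial one: among the order ideals $I\subseteq \H{w}$ whose element is a nontrivial involution, the maximal size is attained by exactly one ideal. Existence of a nontrivial involution prefix is immediate, since any $s\in S$ with $s\weakr w$ is one, and $w\neq 1$ guarantees such an $s$.

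\textbf{Step 1: characterize fully commutative involutions.} First I will show that a fully commutative $x$ is an involution if and only if $\H{x}$ admits a label-preserving anti-automorphism, because reversing a reduced word of $x$ gives a reduced word of $x^{-1}$. From this I will derive a normal form
\[
x = u\,c\,u^{-1}, \qquad \ell(x)=2\ell(u)+\ell(c),
\]
where $c$ is a product of pairwise commuting generators. The argument is an induction that peels a minimal element of $\H{x}$ together with its mirror maximal element. At the centre one is left with a set of labels that is simultaneously minimal and maximal, so these labels pairwise commute. The consequence I need is that the left descent set of $x$ equals its right descent set, and that for $s\in D_L(x)$ either $sxs=x$ with $s$ occurring in $c$, or $sxs$ is an involution of length $\ell(x)-2$.

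\textbf{Step 2: induct on $\ell(w)$.} Suppose $x$ and $y$ are two involution prefixes of maximal length. I will compare their minimal heap elements, whose labels lie in $D_L(w)$.
\begin{itemize}
\item If some $s$ lies in $D_L(x)\cap D_L(y)$ and is not central in both, write $w=sw'$ with $\ell(w')=\ell(w)-1$. Then $sxs$ and $sys$ should become maximal-length involution prefixes of a suitable shorter fully commutative element. That element is obtained from $\H{w'}$ by deleting the mirror copy of $s$, and the induction hypothesis forces $sxs=sys$.
\item If the minimal labels of $x$ and $y$ differ, I will build an involution prefix longer than both, contradicting maximality. The idea is to merge the two symmetric ideals inside $\H{w}$: take their union, then close it up under the symmetry. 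The resulting ideal is still contained in $\H{w}$ because both $x$ and $y$ are.
\end{itemize}

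\textbf{Main obstacle.} I expect the main obstacle to be this merging step. The union of two order ideals is an order ideal, so the corresponding element $x\vee y$ is a prefix of $w$; however, it need not be an involution. I would first try to show that the ideal generated by the ``lower halves'' $\H{u_x}\cup\H{u_y}$ together with the commuting centres can be reflected symmetrically inside $\H{w}$. This relies on the fact that in a heap, chains between commuting-free labels are rigid. If that fails, I would fall back on a purely local argument. That argument shows that two distinct maximal symmetric ideals must differ at a minimal element, and then uses a single generator of $D_L(w)$ to enlarge one of them, which contradicts maximality.
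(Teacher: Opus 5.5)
There is a genuine gap, and it is the merging step you flag as the main obstacle. Your diagnosis of that step is also wrong. For fully commutative $w$, the join of two involution prefixes \emph{is} an involution. This is the paper's key lemma (Proposition~\ref{joinisinvolution}), and with it the theorem is immediate: if $x\neq y$ were both involution prefixes of maximal length, then $x\vee y\weakr w$ would be an involution with $\ell(x\vee y)>\ell(x)$. No induction on $\ell(w)$ and no $u\,c\,u^{-1}$ normal form is needed; your Step 1 is correct but unnecessary.

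The lemma is proved by induction on $\ell(x)+\ell(y)$:
\begin{enumerate}
\item By Proposition~\ref{joinidealprop}, which rests on uniqueness of the ideal representing a prefix, $\H{x\vee y}=I\cup J$, where $I,J$ are the ideals of $x,y$.
\item If $I\cup J$ is an antichain, $x\vee y$ is a product of commuting generators, hence an involution.
\item Otherwise take a maximal, non-minimal vertex $q$, say $q\in I$, with label $s$. Since $D_R(x)=D_L(x)$, there is a minimal $s$-vertex $p<q$ in $I$. Deleting $p$ and $q$ gives $\H{s(x\vee y)s}$.
\item There are three cases: $p\notin J$; $p$ is isolated in $J$; or $p$ is minimal but not maximal in $J$. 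In the last case the top $s$-vertex of $J$ lies below $q$, hence in $I$. Accordingly, the smaller heap is the union of the ideals of $sxs$ and of $y'=y$, $ys$ or $sys$, each an involution. So $s(x\vee y)s=(sxs)\vee y'$, and induction applies.
\end{enumerate}
No ``closing up under the symmetry'' is needed. This matters, because you give no reason such a closure would stay inside $\H{w}$.

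Separately, your Step 2 does not go through as written.
\begin{enumerate}
\item The case split is not exhaustive. It never treats $D_L(x)=D_L(y)$ with a common descent isolated in one heap but not the other. If ``not central in both'' is meant to cover that case, then $sxs=x$ while $\ell(sys)=\ell(y)-2$, so the two cannot both be maximal anywhere.
\item In the first bullet, ``the mirror copy of $s$'' presupposes that the top $s$-vertices of the ideals of $x$ and $y$ coincide in $\H{w}$. A priori they need not, and ruling this out is essentially the theorem itself.
\item Deleting a non-maximal vertex of $\H{w'}$ need not leave a reduced word. Take $m(s,t)=\infty$, $w=stst$ and $x=y=sts$: deleting the mirror $s$ from $\H{tst}$ leaves $tt$.
\item You never show that $sxs$ and $sys$ have maximal length among the involution prefixes of the shorter element. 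They are already prefixes of $w'$, but $w'$ can have longer ones: in the same example $w'=tst$ is itself an involution, while $sxs=t$. So the induction hypothesis cannot be invoked.
\end{enumerate}
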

Note that any element of a right-angled Coxeter system is fully commutative. Hence, we obtain the following corollary:

\begin{cor}
    Let $ (W,S)$ be a right-angled Coxeter system. If $ w\in W$ is such that $ w\neq 1$, then $ w$ has the ancestor property.
\end{cor}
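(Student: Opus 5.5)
The corollary should follow directly from Theorem~\ref{solvedconj}. The only thing to check is the claim made just before it: in a right-angled Coxeter system $(W,S)$, every element $w \in W$ is fully commutative. Once that is in place, every non-identity $w$ satisfies the hypotheses of Theorem~\ref{solvedconj}, and so $w$ has the ancestor property.

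To verify full commutativity, I would use the Tits (Matsumoto) word property. In any Coxeter system, any two reduced expressions for the same element are connected by a finite sequence of braid moves. Each move replaces an alternating subword $rsr\cdots$ of length $m(r,s)$ by $srs\cdots$ of the same length, where $r \neq s$ and $m(r,s) < \infty$.

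In a right-angled system, property $(2^{\ast})$ says that for $r \neq s$ we have $m(r,s) \in \{2,\infty\}$. When $m(r,s)=\infty$ there is no braid relation, so no move involves that pair. Every braid move that actually occurs therefore has $m(r,s)=2$, which is the commutation $rs = sr$ of two commuting generators.

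Consequently, any reduced expression for $w$ can be transformed into any other using only commutations. This is exactly Stembridge's definition of $w$ being fully commutative \cite{stembridge1996fully}.

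There is no real obstacle at this level: all the substantive work lies in Theorem~\ref{solvedconj}, which we may assume. The one point needing care is to invoke the word property precisely, so that no relation other than $m=2$ commutations can intervene. That is guaranteed because the presentation in the definition of a Coxeter system omits the relation for $m(r,s)=\infty$.
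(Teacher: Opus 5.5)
Your proposal is correct and takes the same approach as the paper: the paper also notes that every element of a right-angled Coxeter system is fully commutative and then applies Theorem~\ref{solvedconj}. Your appeal to the Matsumoto--Tits word property just supplies the justification for that full-commutativity fact, which the paper states without proof.
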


Lastly, we provide an axiomatization of right angled Coxeter systems as reflection systems with a reflection cocycle satisfying a certain property. An ordered pair $ (G,R)$ is a \emph{reflection system} if it satisfies the following properties:
\begin{enumerate}
    \item $G$ is a group

    \item $ R$ is a set of non-identity involutions that generate $W$
\end{enumerate}
If $ (G,R)$ is a reflection system, we define $ T: = \bigcup_{g\in G}gRg^{-1} $. Given a reflection system $ (G,R)$, a \emph{reflection cocycle} is a function $ N: G \rightarrow \mathcal{P}(T)$ satisfying the following properties:

\begin{enumerate}
    \item for all $ r\in R$, $ N(r) = \{ r \}$

    \item for all $ x,y\in W$, $N(xy) = N(x) \Delta xN(y)x^{-1}$ where $ \Delta$ denotes the symmetric difference of sets 
\end{enumerate}
Dyer proved the following theorem (see Lemma 1.2 and Lemma 1.3 of \cite{dyer1987hecke}; also see the introduction of \cite{dyer1990reflection}):

\begin{thm}{(Dyer)} \label{dyerthm}
    $ (W,S)$ is a Coxeter system if and only if $ (W,S)$ is a reflection system with a reflection cocycle $ N : W \rightarrow \mathcal{P}(T)$. Furthermore, if $ (W,S)$ is a Coxeter system, then $ N: W \rightarrow \mathcal{P}(W)$ is the unique reflection cocycle associated to the reflection system $ (W,S)$, and

    $$ N(w) = \{ t\in T \mid \ell(tw) < \ell(w)   \}$$
    for all $w\in W$.
\end{thm}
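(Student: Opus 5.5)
The plan is to treat the three assertions separately: uniqueness, the forward direction together with the formula for $N$, and the converse. \emph{Uniqueness} is formal. Any reflection cocycle satisfies $N(1)=N(1)\Delta N(1)=\emptyset$. Iterating the cocycle identity on a word $w=s_1\cdots s_k$ with $s_i\in S$ gives
$$N(w)=\{s_1\}\Delta\{s_1s_2s_1\}\Delta\cdots\Delta\{s_1\cdots s_{k-1}s_ks_{k-1}\cdots s_1\}.$$
Since $S$ generates $W$, this means $N$ is determined by its values on $S$, which property (1) fixes. So at most one reflection cocycle exists for a given reflection system.

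\emph{Forward direction.} Let $(W,S)$ be a Coxeter system. I would construct $N$ through a permutation representation instead of checking directly that the displayed formula is independent of the word. On $T\times\{\pm1\}$, let each $s\in S$ act by
$$s\cdot(t,\varepsilon)=\bigl(sts,\ \varepsilon\cdot(-1)^{\delta_{s,t}}\bigr).$$
Each such map is an involution. The key step, and the main obstacle in this direction, is to check that $(st)^{m(s,t)}$ acts trivially, so that the action factors through $W$. For this I would use the same cocycle formula: the action of a word $u$ flips the sign of $(t,\varepsilon)$ exactly when $t$ occurs an odd number of times in the associated sequence $s_1,\ s_1s_2s_1,\dots$. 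For $u=(rs)^m$ with $m=m(r,s)$ finite, the $2m$ entries of this sequence are the reflections of the dihedral subgroup $\langle r,s\rangle$. Each of its $m$ reflections occurs exactly twice, because $(rs)$ has order exactly $m$ in $W$, so every sign is preserved.

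With the action in hand, I would set $N(w)=\{t\in T: w^{-1}\cdot(t,1)=(w^{-1}tw,-1)\}$. It satisfies the cocycle identity by construction and $N(s)=\{s\}$. It remains to identify $N(w)$ with $\{t:\ell(tw)<\ell(w)\}$. I would argue by induction on $\ell(w)$, using the elementary fact that $\ell(ws)=\ell(w)\pm1$, in the form $N(ws)=N(w)\Delta\{wsw^{-1}\}$. One shows $wsw^{-1}\in N(w)$ if and only if $\ell(ws)<\ell(w)$, and then that $|N(w)|=\ell(w)$, using that the sequence attached to a reduced word has no repetitions. These give the strong exchange condition, and the formula follows from it.

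\emph{Converse.} Suppose $(W,S)$ is a reflection system with a reflection cocycle $N$. I would verify the exchange condition, since by the classical Matsumoto--Tits/Bourbaki theorem a group generated by involutions satisfying it is a Coxeter system.
\begin{enumerate}
\item Take a word $w=s_1\cdots s_k$ of minimal length. Its associated reflections $t_i=s_1\cdots s_i\cdots s_1$ are pairwise distinct. Indeed, if $t_i=t_j$ with $i<j$, then $t_it_j=1$ yields $w=s_1\cdots\widehat{s_i}\cdots\widehat{s_j}\cdots s_k$, which contradicts minimality.
\item Hence the symmetric difference displayed above is a disjoint union, so $|N(w)|=\ell(w)$ and $N(w)=\{t_1,\dots,t_k\}$.
\item Now let $s\in S$ with $\ell(ws)\le\ell(w)$. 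Then $N(ws)=N(w)\Delta\{wsw^{-1}\}$ has size $\ell(ws)=\ell(w)-1$ (lengths change by one under right multiplication by $s$, since $\ell(ws)\equiv\ell(w)+1\pmod 2$, which follows from the sign character given by the parity of $|N|$). Therefore $wsw^{-1}\in N(w)$, so $wsw^{-1}=t_i$ for some $i$.
\item This gives $ws=t_iw=s_1\cdots\widehat{s_i}\cdots s_k$, which is the exchange condition.
\end{enumerate}
The main subtlety on this side is in steps (1)--(2), making sure that repetitions genuinely allow a shortening. Together with uniqueness, this also recovers the formula $N(w)=\{t:\ell(tw)<\ell(w)\}$.
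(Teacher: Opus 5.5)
The paper gives no argument for this theorem; it only cites Dyer. Your reconstruction follows the standard route and is sound: uniqueness from the iterated cocycle formula, existence from the permutation representation on $T\times\{\pm1\}$, and the converse by verifying the exchange condition and invoking the Bourbaki / Bj\"{o}rner--Brenti characterization of Coxeter systems by that condition. Your converse steps (1)--(4) are complete. They use only the cocycle identity, so they also apply verbatim to the cocycle you build in the forward direction. That gives $N(w)=\{t_1,\dots,t_k\}$ and $|N(w)|=\ell(w)$ for any reduced word, with no induction needed.

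Two refinements. First, when checking the braid relations you do not need that $rs$ has order exactly $m$, a fact usually proved via the geometric representation. The sequence attached to $(rs)^m$ is $t_i=(rs)^{i-1}r$ for $1\le i\le 2m$, and $(rs)^m=1$ alone gives $t_{i+m}=t_i$. So every reflection occurs an even number of times.

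Second, the sentence ``these give the strong exchange condition'' skips the one nontrivial point. The facts you list, together with $t_iw=s_1\cdots\widehat{s_i}\cdots s_k$, give only $N(w)\subseteq\{t:\ell(tw)<\ell(w)\}$ and the ordinary exchange condition. For the reverse inclusion you need $t\in N(t)$ for every $t\in T$. Write $t=s_1\cdots s_{j-1}s_js_{j-1}\cdots s_1$; the attached sequence is $t_1,\dots,t_{j-1},t,tt_{j-1}t,\dots,tt_1t$, in which $t$ occurs an odd number of times. The cocycle identity $N(w)=N(t)\,\Delta\,tN(tw)t$ then gives $t\in N(w)\iff t\notin N(tw)$. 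Now suppose $\ell(tw)<\ell(w)$ but $t\notin N(w)$. Then $t\in N(tw)$, and the inclusion already proved gives $\ell(w)=\ell(t\cdot tw)<\ell(tw)$, a contradiction. With this step added, the formula $N(w)=\{t\in T:\ell(tw)<\ell(w)\}$ follows, and strong exchange becomes a consequence rather than an ingredient.
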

We prove the following characterization of right-angled Coxeter systems:

\begin{thm} \label{gimenezracsthm}
    $ (W,S)$ is a right-angled Coxeter system if and only if $ (W,S)$ is a reflection system with a reflection cocycle $ N: W \rightarrow \mathcal{P}(T)$ satisfying the \emph{meet intersection condition}: for all $ x,y\in W$, there is a unique $ z \in W$ such that $ N(z) = N(x) \cap N(y)$.
\end{thm}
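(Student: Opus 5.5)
By Theorem \ref{dyerthm}, both directions reduce to the Coxeter case with $N(w)=\{t\in T\mid \ell(tw)<\ell(w)\}$. We use the standard facts that $N$ is injective, $\ell(w)=|N(w)|$, and $x\weakr w$ if and only if $N(x)\subseteq N(w)$. Also $N(x)$ is the inversion set of $x^{-1}$ in the usual sense, and $x\mapsto x^{-1}$ turns $\weakr$ into $\weakl$. So the meet intersection condition says exactly that for all $x,y$ there is $z$ with $N(z)=N(x)\cap N(y)$; uniqueness is automatic from injectivity. Such a $z$ is then the meet $x\wedge y$ in the weak order, and moreover its inversion set is the full intersection, not merely contained in it.

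\emph{Right-angled implies the meet intersection condition.} Fix $x,y$. The weak order is a meet semilattice, so $z=x\wedge y$ exists and $N(z)\subseteq N(x)\cap N(y)$; the task is the reverse inclusion. I would argue by induction on $\ell(x)+\ell(y)$. Suppose $t\in N(x)\cap N(y)$ but $t\notin N(z)$. In a right-angled system every element is fully commutative, so reduced words of $x$ correspond to linear extensions of a heap, and $N(x)$ is the set of reflections $t_i=s_1\cdots s_{i-1}s_is_{i-1}\cdots s_1$ attached to the heap elements. The key lemma to prove is that in a right-angled group a reflection $t$ determines its heap element canonically. Concretely, $t=uau^{-1}$ with $a\in S$ and $u$ of minimal length, and $t\in N(x)$ if and only if $ua\weakr x$ with $\ell(ua)=\ell(u)+1$. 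This uses the fact that in a right-angled group the conjugator $u$ is unique up to right multiplication by elements of the centralizer subgroup generated by $\{b\in S\mid m(a,b)=2\}$, which commute past $a$. Granting the lemma, $ua\weakr x$ and $ua\weakr y$ give $ua\weakr z$, hence $t\in N(z)$, a contradiction. I expect this lemma to be the main obstacle. I would prove it via the normal form for right-angled Coxeter groups (Tits' solution to the word problem, where only commutations and deletions $ss\to 1$ occur), by showing that a reduced expression of $x$ whose prefix reflection equals $t$ can be commuted so that $ua$ becomes a prefix.

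\emph{Meet intersection condition implies right-angled.} Suppose $m(r,s)=m$ with $3\le m<\infty$, and take $x=rsr\cdots$ and $y=srs\cdots$, each of length $m-1$. Then $N(x)$ and $N(y)$ are each $m-1$ of the $m$ reflections of the dihedral subgroup $\langle r,s\rangle$. Since $m\ge 3$, their intersection has $m-2\ge 1$ elements. But any $z\weakr x$ and $z\weakr y$ must be $1$, because a reduced word of $z$ would have to begin with both $r$ and $s$, yet the prefixes of $x$ and $y$ are distinct unique words of length less than $m$. So no $z$ has $N(z)$ equal to this nonempty set, and the condition fails. Here one should double-check which side the inversions sit on under this paper's convention for $\weakr$. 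If needed, replace $x,y$ by their inverses; the dihedral argument is symmetric. Hence every $m(r,s)\in\{2,\infty\}$ and $(W,S)$ is right-angled.
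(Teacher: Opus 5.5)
Your overall architecture matches the paper's. In the converse direction your $x,y$ are exactly the paper's $w_0r$ and $w_0s$. Your observation that their only common lower bound is $1$, while $N(x)\cap N(y)=T^{\ast}\setminus\{r,s\}\neq\emptyset$, is equivalent to the paper's remark that a nonempty $N(z)$ contains a simple reflection. Your worry about conventions is unfounded: with $N(w)=\{t\mid \ell(tw)<\ell(w)\}$, Proposition \ref{weakrequivreflcocycle} describes exactly the prefix order, so no inverses are needed. In the forward direction you, like Proposition \ref{gimenezthmonedirection}, reduce everything to showing that $t$ determines a single element lying below every $x$ with $t\in N(x)$. The announced induction on $\ell(x)+\ell(y)$ plays no role.

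The gap is that this canonicity lemma, which is the whole content of the forward direction, is left unproved, and the fact you reduce it to is misstated. The centralizer of $a$ in a right-angled system is $\langle a\rangle\times W_{\{b\mid m(a,b)=2\}}=W_{\mathrm{st}(a)}$, not $W_{\{b\mid m(a,b)=2\}}$: both $u$ and $ua$ conjugate $a$ to $t$. You also need that $a$ is determined by $t$, i.e.\ distinct simple reflections of a right-angled system are not conjugate; this is the fact the paper uses in Proposition \ref{racsreflbijection}.

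Granting $C_W(a)=W_{\mathrm{st}(a)}$, your lemma does follow. Let $u$ be the minimal coset representative of the conjugator coset $u_0W_{\mathrm{st}(a)}$. If $t\in N(x)$, some prefix $va\weakr x$ of a reduced word for $x$ satisfies $t=vav^{-1}$. Then $v=uc$ with $c\in W_{\mathrm{st}(a)}$ and $\ell(v)=\ell(u)+\ell(c)$. The condition $\ell(va)>\ell(v)$ forces $c$ to have trivial $a$-component, so $va=uac$ with lengths adding, and hence $ua\weakr va\weakr x$.

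The centralizer identity itself is not free. One way to get it: if $w\neq 1$ is a minimal coset representative for $W_{\mathrm{st}(a)}$, every right descent $b$ of $w$ has $m(a,b)=\infty$. Hence $wa$ has $a$ as its unique right descent. The argument of Proposition \ref{reflwordreduced} then shows the palindromic word for $waw^{-1}$ is reduced, of length $2\ell(w)+1>1$, so $waw^{-1}\neq a$.

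Compared with the paper: your $ua$ is exactly the element $z$ with $\mathcal{H}(z)\cong I(q)$ for the principal order ideal attached to $t$. The paper proves that $z$ is independent of $x$ by showing that reduced palindromic words for $t$ have equal halves (Propositions \ref{reflwordreduced} and \ref{reflwordequal}), with no mention of centralizers. Your route is more group-theoretic and names the canonical element as a minimal coset representative times $a$. However, it moves the work into the centralizer theorem, which in the end rests on the same reducedness-of-palindromes fact.
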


\section{Proofs}

\begin{defi}
    Let $ A$ be a set. An \emph{$A$-heap} is a triple $ (P, \leq , f)$ such that the following are true:

    \begin{enumerate}
        \item $(P , \leq )$ is a partial order

        \item $f: P \rightarrow A$ is a function called the \emph{$A$-labeling of the heap}
    \end{enumerate}
\end{defi}

\begin{defi}
    Let $( P_{1}, \leq_{1} , f_{1})$ and $ (P_{2} , \leq_{2} , f_{2})$ be $A$-heaps. A morphism of $ A$-heaps $ \phi: ( P_{1}, \leq_{1} , f_{1}) \rightarrow (P_{2} , \leq_{2} , f_{2})$ satisfies the following properties:

    \begin{enumerate}
        \item $ \phi : P_{1} \rightarrow P_{2}$ is a function
        
        \item for all $ x,y \in P_{1}$, $ x \leq_{1} y$ implies $ \phi(x) \leq_{2} \phi(y)$

        \item for all $ x \in P_{1}$, $ f_{1}(x) = f_{2}(\phi(x))$
    \end{enumerate}
\end{defi}
Given an $ A$-heap $ (P , \leq , f)$, one has an identity morphism $ id_{P} : (P , \leq , f) \rightarrow (P , \leq , f) $ given by $ id_{P}(x) = x$ for all $ x \in P$.

\begin{defi}
    A morphism $ \phi : (P_{1}, \leq_{1} , f_{1}) \rightarrow (P_{2}, \leq_{2}, f_{2})$ of $ A$-heaps is called an \emph{isomorphism} if there exists a morphism $ \psi : (P_{2}, \leq_{2}, f_{2}) \rightarrow (P_{1}, \leq_{1}, f_{1})$ such that $ \psi \circ \phi = id_{P_{1}}$ and $ \phi \circ \psi = id_{P_{2}}$.
\end{defi}

\begin{defi}
    Let $ (W,S)$ be a Coxeter system. Let $ w\in W$. $w$ is called \emph{fully commutative} if any reduced 
expression for $w$ can be obtained from any other by means of braid relations that only involve commuting generators \cite{stembridge1996fully}.
\end{defi}
For $ n\in \N$, define $ [n] := \{ 0,1,2, \dots , n \}$.

\begin{defi}{(Stembridge \cite{stembridge1996fully})}
    Let $ (W,S)$ be a Coxeter system, and let $ w\in W$ be a fully commutative element. Let $ \textbf{\underline{w}} = (s_{1} , s_{2} , \dots , s_{n})$ be a reduced expression for $w$. Define a partial order $ \leq_{\textbf{\underline{w}}}$ on $ [n]$ given by the transitive closure of the following relation:

    $$ i \prec j \textrm{ if and only if } i < j \textrm{ and } m(s_{i}, s_{j}) \neq 2$$ 
    where $ m: S \times S \rightarrow \N \cup \{ \infty \}$ is the Coxeter matrix associated to $ (W,S)$ (and thus $ m(s_{i}, s_{j})$ is the order of $ s_{i}s_{j}$ in $ W$). Let $ f_{\textbf{\underline{w}}} : [n] \rightarrow S$ be the function given by $ f(i) = s_{i}$. Then $ ([n] , \leq_{\textbf{\underline{w}}} , f_{\textbf{\underline{w}}})$ is the \emph{$S$-heap} associated to the fully commutative element $w$.
\end{defi}

\begin{thm}{(Stembridge \cite{stembridge1996fully})} \label{welldefthm}
    Let $ (W,S)$ be a Coxeter system. Let $ w \in W$ be a fully commutative element. Let $ \textbf{\underline{w}}_{1} = (s_{1}, s_{2} , \dots , s_{n})$ and $ \textbf{\underline{w}}_{2} = (r_{1}, r_{2}, \dots , r_{n})$ be two reduced expressions for $ w$. Then the $ S$-heap associated to $w$ is well-defined up to isomorphism, meaning that $ ([n] , \leq_{\textbf{\underline{w}}_{1}}, f_{\textbf{\underline{w}}_{1}}) \cong ([n], \leq_{\textbf{\underline{w}}_{2}}, f_{\textbf{\underline{w}}_{2}})$ as $S$-heaps. 
\end{thm}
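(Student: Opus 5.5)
Because $w$ is fully commutative, any two reduced expressions $\textbf{\underline{w}}_{1}$ and $\textbf{\underline{w}}_{2}$ are connected by a finite chain of reduced expressions, each obtained from the previous one by a single commutation, that is, by swapping two adjacent letters $s_{k},s_{k+1}$ with $m(s_{k},s_{k+1})=2$. Isomorphism of $S$-heaps is transitive: a composition of isomorphisms is again an isomorphism, with inverse the composition of the inverses. So it suffices to handle one elementary step. I will assume $\textbf{\underline{w}}_{2}$ is obtained from $\textbf{\underline{w}}_{1}$ by swapping positions $k$ and $k+1$. Here $r_{k}=s_{k+1}$, $r_{k+1}=s_{k}$, $r_{i}=s_{i}$ for all other $i$, and $m(s_{k},s_{k+1})=2$.

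Let $\tau:[n]\to[n]$ be the transposition exchanging $k$ and $k+1$. I claim $\tau$ is an isomorphism of $S$-heaps, with inverse $\tau$ itself. The labeling condition $f_{\textbf{\underline{w}}_{2}}(\tau(i))=r_{\tau(i)}=s_{i}=f_{\textbf{\underline{w}}_{1}}(i)$ is immediate from the definitions. For order preservation, both orders are transitive closures, so it is enough to show that $\tau$ maps every generating relation $i\prec_{1}j$ to a relation $\tau(i)\prec_{2}\tau(j)$. By symmetry, the same argument then applies to $\tau$ as a map in the other direction.

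Take $i<j$ with $m(s_{i},s_{j})\neq 2$. The pair $\{i,j\}$ cannot equal $\{k,k+1\}$, since $m(s_{k},s_{k+1})=2$. In every other case $\tau$ preserves the strict inequality, so $\tau(i)<\tau(j)$. The labels of $\tau(i)$ and $\tau(j)$ in $\textbf{\underline{w}}_{2}$ are $s_{i}$ and $s_{j}$, so $m(r_{\tau(i)},r_{\tau(j)})=m(s_{i},s_{j})\neq 2$, and hence $\tau(i)\prec_{2}\tau(j)$.

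The only point needing care is the reduction to single commutations. This is exactly the definition of full commutativity: one reduced word is obtained from another by braid moves involving only commuting generators, and each such move is a length-two swap. Index bookkeeping aside, there is no real obstacle. If $[n]$ is taken literally as $\{0,\dots,n\}$ while words are indexed $1,\dots,n$, I will treat the extra element $0$ as fixed by $\tau$, with any convention about its label inherited identically on both sides, so it does not affect the argument.
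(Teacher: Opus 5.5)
Your proof is correct. The paper does not prove this statement itself; it only points to Proposition 2.2 of Stembridge and the discussion after it, which rests on the standard fact that commutation-equivalent words have isomorphic heaps. You verify that fact directly by checking that the adjacent transposition carries each generating relation $i\prec j$ to a generating relation, so you are supplying the argument the paper outsources.

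Your handling of the extra index $0$ is harmless. The paper's $[n]=\{0,1,\dots,n\}$ is evidently a slip for $\{1,\dots,n\}$, as its later use of $|\H{w}|=\ell(w)$ confirms.
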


\begin{proof}
    See Proposition 2.2 of \cite{stembridge1996fully} and the discussion thereafter.
\end{proof}
If $w\in W$ is a fully commutative element, we let $ \H{w}$ denote the $ S$-heap associated to $ w$ modulo isomorphism. Note that by the previous theorem, $\H{w}$ is well-defined regardless of which reduced expression one chooses for $ w$. We let $ \leq_{w}$ and $f_{w} $ denote the partial order and $S$-labeling of $ \H{w}$ respectively

Given a Coxeter system $ (W,S)$, we let $ \weakr $ and $ \weakl$ denote the weak right order and weak left order of $ (W,S)$ respectively.

\begin{thm}{(Stembridge \cite{stembridge1996fully})} \label{fullcommweakorderthm}
    Let $ (W,S)$ be a Coxeter system. Let $ w \in W$ be a fully commutative element. If $ x\in W$ such that $ x \weakl w $ or $ x\weakr w$, then $x$ must be fully commutative.
\end{thm}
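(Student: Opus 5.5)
We need to prove: if $w$ is fully commutative and $x \weakl w$ or $x \weakr w$, then $x$ is fully commutative. We use the characterization that $w$ is fully commutative if and only if no reduced expression of $w$ contains a contiguous braid factor $srs\cdots$ of length $m(s,r)\ge 3$. This is Stembridge's Proposition 2.1 and rests on Matsumoto's theorem. We treat the case $x \weakr w$; the case $x\weakl w$ is symmetric, obtained by applying the anti-automorphism $u\mapsto u^{-1}$, which swaps the two weak orders and preserves full commutativity because it reverses reduced expressions and braid relations.

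First I would recall what $x \weakr w$ means: there is $y\in W$ with $w = xy$ and $\ell(w) = \ell(x)+\ell(y)$. Hence for any reduced expression $\underline{\mathbf{x}}$ of $x$ and any reduced expression $\underline{\mathbf{y}}$ of $y$, the concatenation $\underline{\mathbf{x}}\,\underline{\mathbf{y}}$ is a reduced expression for $w$.

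Now suppose, for contradiction, that $x$ is not fully commutative. By Matsumoto's theorem, any two reduced expressions of $x$ are connected by braid moves. If only commuting braid moves were ever needed, $x$ would be fully commutative, so some reduced expression $\underline{\mathbf{x}}'$ of $x$ admits a non-commuting braid move. That is, $\underline{\mathbf{x}}'$ contains a contiguous subword $srs\cdots$ of length $m(s,r)\geq 3$. Concatenating with a fixed reduced expression $\underline{\mathbf{y}}$ of $y$ gives a reduced expression $\underline{\mathbf{x}}'\underline{\mathbf{y}}$ of $w$ containing that same contiguous subword. Replacing the subword by $rsr\cdots$ yields another reduced expression of $w$.

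The remaining step, which I expect to be the only real obstacle, is to show that these two reduced expressions of $w$ cannot be connected by commuting braid moves alone. The cleanest invariant is the heap, or equivalently the commutation class. Commuting moves preserve, for each pair of non-commuting generators $s,r$, the relative order of the occurrences of $s$ and $r$ in the word; for such a pair, $s$ and $r$ cannot be swapped past each other by commuting moves. Consider the subsequence of letters lying in $\{s,r\}$. In the original word this subsequence contains the alternating block $srs\cdots$; after the non-commuting move it contains $rsr\cdots$ in the same positions, while all other letters are unchanged. The $\{s,r\}$-subsequences of the two words therefore differ, since the counts of $s$ and $r$ in the block change when $m$ is odd, and the order changes when $m$ is even. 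Commuting moves never alter this subsequence, because $s$ and $r$ never commute with each other. So the two reduced expressions lie in different commutation classes, contradicting the full commutativity of $w$. Hence $x$ is fully commutative.
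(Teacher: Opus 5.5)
Your proof is correct. The paper gives no argument of its own for this statement; it simply cites Proposition 2.4 of Stembridge. You instead give a self-contained derivation. Since $x \weakr w$, a reduced word of $x$ followed by a reduced word of $x^{-1}w$ is a reduced word of $w$. So an alternating factor $srs\cdots$ of length $m(s,r)\geq 3$ in some reduced word of $x$ (which Matsumoto's theorem produces when $x$ is not fully commutative) survives in a reduced word of $w$. Your invariant, that commutation moves leave the subsequence of letters from $\{s,r\}$ unchanged when $m(s,r)\geq 3$, then shows that performing the braid move there leaves the commutation class, so $w$ would not be fully commutative. In effect you reprove both directions of Stembridge's braid-factor criterion inline, so your only external input is Matsumoto's theorem; the paper's citation is shorter but leaves all of this to the reference. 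Two minor streamlinings are possible. The two $\{s,r\}$-subsequences differ simply because they agree outside the block while the two blocks begin with different letters, so no parity case split is needed. The case $x \weakl w$ can be done directly by prepending a reduced word of $wx^{-1}$, although your reduction via $u \mapsto u^{-1}$ is equally valid.
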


\begin{proof}
    See Proposition 2.4 of \cite{stembridge1996fully}.    
\end{proof}

\begin{prop} \label{removemaxprop}
    Let $ (W,S)$ be a Coxeter system. Let $ w$ be a fully commutative element. Consider $\H{w}$. Then the following are true:

    \begin{enumerate}
        \item Let $ D_{R}(w) : = \{ s\in S \mid \ell(ws) < \ell(w)  \}$.
        Then $ s\in D_{R}(w)$ if and only if there is a maximal vertex $ q\in \H{w}$ such that the $S$-label of $ q$ is $s$.

        \item Let $ s\in D_{R}(w)$. Then $ ws$ is fully commutative, and $ \H{ws}\cong \H{w} \setminus \{ q \}$ for some maximal vertex $q\in \H{w}$ such that the $S$-label of $q$ is $s$.

        \item Let $ q_{1}, q_{2} \in \H{w}$ denote two distinct maximal vertices. Then the $S$-labels of $ q_{1}$ and $q_{2}$ must be distinct.

        \item Let $ q \in \H{w}$ be a maximal element with $ S$-label $s$. Then $ ws$ is fully commutative and $ \H{ws} \cong \H{w} \setminus \{ q  \}$.

        \item Let $ x,y\in W$ such that $ x$ and $ y$ are both fully commutative elements. Then $ x=y$ if and only if $ \H{x} \cong \H{y}$.
    \end{enumerate}
\end{prop}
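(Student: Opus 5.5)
The approach is to work throughout with a fixed reduced expression $\textbf{\underline{w}} = (s_{1}, \dots, s_{n})$ for $w$ and its heap $([n], \leq_{\textbf{\underline{w}}}, f_{\textbf{\underline{w}}})$. The basic tool is the observation that linear extensions of $\H{w}$ correspond to reduced expressions of $w$. Indeed, any linear extension is reached from the given one by swapping adjacent incomparable positions. Incomparable adjacent positions $i,j$ have $m(s_i,s_j)=2$, so such a swap is a commutation and the word still represents $w$ with the same length. Conversely, Theorem \ref{welldefthm} says every reduced expression arises in this way, up to isomorphism of heaps.

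For (4): given a maximal vertex $q$, choose a linear extension in which $q$ is last. The corresponding reduced word is $(\dots, s)$, so $ws$ has the reduced word obtained by deleting the last letter. Since $ws \weakr w$, it is fully commutative by Theorem \ref{fullcommweakorderthm}. Its heap, built from this shortened word, is precisely the induced subposet $\H{w}\setminus\{q\}$, because the relation $\prec$ only involves pairs of remaining letters.

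For (1): the backward direction is exactly the computation in (4). For the forward direction, suppose $\ell(ws)<\ell(w)$. By the exchange condition there is a reduced word for $w$ ending in $s$. Its heap is isomorphic to $\H{w}$ by Theorem \ref{welldefthm}, and its last position is maximal with label $s$. Part (2) then follows by combining (1) with (4).

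For (3): suppose $q_1 \neq q_2$ are maximal and both carry label $s$. Elements with the same label have $m(s,s)=1\neq 2$, so they are comparable. Two distinct comparable elements cannot both be maximal, which is a contradiction.

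For (5): if $x=y$, then $\H{x}\cong\H{y}$ by Theorem \ref{welldefthm}. Conversely, an isomorphism of $S$-heaps carries a linear extension of $\H{x}$ to a linear extension of $\H{y}$ with the same label sequence. The label sequence of a linear extension of $\H{x}$ is a reduced word for $x$ by the basic observation, so $x$ and $y$ share a reduced word and are therefore equal.

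The main obstacle is the basic observation itself, namely that every linear extension of $[n]$ under $\leq_{\textbf{\underline{w}}}$ yields a word equal to $w$. I would prove it by the standard fact that any two linear extensions of a finite poset are connected by adjacent transpositions of incomparable elements. I would do this by induction: move the element that comes first in the target order to the front. Each step in this induction is then a commuting swap, which justifies the claim.
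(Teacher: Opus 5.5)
Your proof is correct, but it is organized differently from the paper's.

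\textbf{The paper's route.} It works with a single reduced word and chains the parts together:
\begin{enumerate}
\item For the backward direction of (1), it takes the last occurrence of $s$ in an arbitrary reduced word and commutes it to the end.
\item (2) deletes the last letter of a word ending in $s$.
\item (3) is obtained indirectly: remove a maximal vertex labeled $r$, then use $r\notin D_{R}(wr)$ and $\ell(wrs)<\ell(wr)<\ell(w)$.
\item (4) is assembled from (1)--(3). Part (2) only produces \emph{some} maximal vertex labeled $s$, so (3) is needed to identify it with $q$.
\item (5) is proved by induction on $|\H{x}|$, peeling off a common maximal vertex using (4).
\end{enumerate}

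\textbf{Your route.} You isolate one lemma: label sequences of linear extensions of the heap are reduced words for $w$. You then derive (1), (4) and (5) from it directly. Part (3) becomes a one-line consequence of the definition: $m(s,s)=1\neq 2$ forces any two equally labeled vertices to be comparable.

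\textbf{What each approach buys.} Your approach is more conceptual and shorter. In particular, your argument for (3) already proves Proposition~\ref{mustbecompprop}, which the paper later derives by a longer order-ideal argument that itself relies on (3). The price is that you must prove the connectivity of linear extensions under adjacent incomparable swaps. The paper avoids this because it only ever moves a single letter.

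\textbf{One step left implicit in (4).} To conclude $\H{ws}\cong\H{w}\setminus\{q\}$, you need the heap of the reordered word to be identified with $\H{w}$ so that its last position corresponds to $q$. This holds for either of two reasons:
\begin{enumerate}
\item The linear-extension bijection carries the generating relation $\prec$ of one heap onto that of the other. When $m\neq 2$ the two positions are related by $\prec$, and a linear extension preserves the direction.
\item Theorem~\ref{welldefthm}, combined with your (3), forces any heap isomorphism to send the last position to the unique maximal vertex labeled $s$, which is $q$.
\end{enumerate}

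\textbf{A terse justification.} You say the heap of the truncated word equals the induced subposet ``because $\prec$ only involves pairs of remaining letters.'' The real reason is slightly different: a maximal vertex cannot occur in the interior of a $\prec$-chain. Hence deleting it does not change the transitive closure on the remaining vertices.
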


\begin{proof}
    For part (1), it is a well-known fact that $ s\in D_{R}(w)$ if and only if there is a reduced expression $ \textbf{\underline{w}} = (s_{1}, s_{2} , \dots , s_{n})$ for $w$ where $ s_{n} =s$. 
    
    Suppose that $ \textbf{\underline{w}} = (s_{1}, s_{2} , \dots , s_{n})$ is a reduced expression for $ w$ with $ s_{n} = s$. Then when one constructs the $S$-heap for $w$ by using the reduced expression $ \textbf{\underline{w}}$, the element $ n\in [n]$ of the heap $ ([n], \leq_{\textbf{\underline{w}}}, f_{\textbf{\underline{w}}})$ will have an $ S$-label of $ s$. By the definition of $ \leq_{\textbf{\underline{w}}}$, $ n \in [n]$ will be a maximal element. Thus, we have shown the existence of a maximal $ q\in \H{w}$ such that the $S$-label of $ q$ is $s$.  

    Suppose now that there exists some maximal $ q\in \H{w}$ such that the $ S$-label of $ q$ is $ s$. Consider any reduced expression $ \textbf{\underline{w}} = (s_{1}, s_{2}, \dots , s_{n})$ for $ w$. Note that since $ q$ has an $S$-label of $ s$, there must exist at least one positive integer $i$ such that $ s_{i} = s$. If $ i=n$, then we have constructed a reduced expression $\textbf{\underline{w}}$ with $ s_{n} = s$. Thus, suppose that $ i \neq n$. Let $ j$ be the largest positive integer such that $ s_{j} = s$. Consider $ s_{j+1}, s_{j+2}, \dots , s_{n}$. If there existed a positive integer $ k >j$ with $ m(s_{j} , s_{k}) \neq 2$, then because $ j$ is the largest positive integer such that $ s_{j} = s$, the definition of $ \leq_{\textbf{\underline{w}}}$ would imply that every vertex $ p \in \H{w}$ whose $S$-label is $ s$ would \emph{not} be a maximal element of $ \H{w}$. This contradicts our assumption that there exists at least one maximal $ q\in \H{w}$ whose $S$-label is $s$. Hence, $ m(s_{j}, s_{k}) = 2$ for all $ k > j$. Hence, $ (s_{j}s_{k})^{2} = 1$ for all $ k> j$, which implies that $ s_{j}s_{k} = s_{k}s_{j}$ for all $k> j$. Therefore, we can move $ s = s_{j}$ to the right end of a reduced expression for $ w$ by commuting $ s_{j}$ with $ s_{k}$ for $ k > j$. This finishes the proof of claim (1).

    Let $ s \in D_{R}(w)$. Thus, there exists some reduced expression $ \textbf{\underline{w}} = (s_{1}, s_{2} , \dots , s_{n})$ with $ s_{n}= s$. Consider $ ([n], \leq_{\textbf{\underline{w}}}, f_{\textbf{\underline{w}}})$. Note that $ n\in [n]$ is maximal in $([n], \leq_{\textbf{\underline{w}}}, f_{\textbf{\underline{w}}}) $ with $ S$-label $s$. Note also that $ \textbf{\underline{ws}} : = (s_{1}, s_{2} , \dots , s_{n-1})$ is a reduced expression for $ ws$, and note further that $ ws$ must be fully commutative by Theorem \ref{fullcommweakorderthm} since $ ws \sweakr w$. Hence, the $ S$-heap $([n-1], \leq_{\textbf{\underline{ws}}}, f_{\textbf{\underline{ws}}})$ is obtained from the $ S$-heap $([n], \leq_{\textbf{\underline{w}}}, f_{\textbf{\underline{w}}}) $ by removing a maximal element with $ S$-label $s$. Thus, $ \H{ws}\cong \H{w} \setminus \{ q \}$ for some maximal $ q\in \H{w}$ with $ S$-label $s$. This proves claim (2).

    Let $ q_{1}, q_{2} \in \H{w}$ be two distinct maximal vertices. Let $ r$ and $ s$ denote the $S$-labels of $ q_{1}$ and $ q_{2}$ respectively. By part (1), we know that $ r \in D_{R}(w)$. By part (2), we know that there must exist some maximal vertex $ q_{3} \in \H{w}$ with $ S$-label $r$ such that $ \H{wr} \cong \H{w} \setminus \{ q_{3} \}$. Note also that $ \ell(wr) < \ell(w)$ where $ r \notin D_{R}(wr)$. If $ q_{1} \neq q_{3}$, then $ q_{1} \in \H{w} \setminus \{ q_{3} \} \cong \H{wr}$ is maximal with $ S$-label $r$. Hence, $r \in D_{R}(wr)$, which contradicts $ r \notin D_{R}(wr) $, and therefore $ q_{1} = q_{3}$. Thus, $ \H{wr} \cong \H{w} \setminus \{ q_{1} \}$. But now observe that $ q_{2} \in \H{w} \setminus \{ q_{1} \} \cong \H{wr}$ is maximal with $ S$-label $s$. Hence, $ \ell(wrs) < \ell(wr)$. Since $ \ell(wr) < \ell(w)$, we deduce that $ \ell(wrs) < \ell(w)$. If $ r=s$, then $ wrs = w$, and thus we would deduce $ \ell(w) < \ell(w)$, which is a contradiction. Thus, $ r\neq s$, and this proves claim (3).

    Let $ q \in \H{w}$ be a maximal element with $S$-label $s$. By part (1), we know that $ s \in D_{R}(w)$. Hence, $ ws \sweakr w$, so it follows that $ ws$ is fully commutative. By part (2), we know that there must exist some maximal $ q_{1} \in \H{w}$ with $ S$-label $s$ such that $ \H{ws} \cong \H{w} \setminus \{ q_{1} \} $. By part (3), we know that $ q = q_{1}$. Hence, $ \H{ws} \cong \H{w} \setminus \{ q \}$, which proves part (4).

    Let $ x,y\in W$ be such that $ x$ and $ y $ are both fully commutative elements. Since Theorem \ref{welldefthm} proved that $ \H{x}$ and $ \H{y}$ are well-defined, it follows that $ x=y$ implies $ \H{x} \cong \H{y}$. Suppose now that $ \H{x} \cong \H{y}$. If $ \H{x} \cong \H{y} = \emptyset$, then $ x=y=1$. Thus, let us suppose that $ \H{x} \cong \H{y} \neq \emptyset$. Let $ q \in \H{x} \cong \H{y}$ be a maximal element, and let $ s$ denote the $S$-label of $ q$. By part (4), we deduce that $ \H{xs} \cong \H{x} \setminus \{  q \}$ and $ \H{ys} \cong \H{y} \setminus \{ q \}$. Thus, we deduce that $\H{xs} \cong \H{x} \setminus \{  q \} \cong \H{y} \setminus \{ q \} \cong \H{ys} $. But since $ \H{xs} \cong \H{ys}$ where $ |\H{xs}| < |\H{x}|$, we can apply induction to conclude that $ xs=ys$, which implies $ x=y$. This proves part (5).
\end{proof}

\begin{prop} \label{removeminprop}
    Let $ (W,S)$ be a Coxeter system. Let $ w$ be a fully commutative element. Consider $\H{w}$. Then the following are true:

    \begin{enumerate}
        \item Let $ D_{L}(w) : = \{ s\in S \mid \ell(sw) < \ell(w)  \}$.
        Then $ s\in D_{L}(w)$ if and only if there is a minimal vertex $ p\in \H{w}$ such that the $S$-label of $ p$ is $s$.

        \item Let $ s\in D_{L}(w)$. Then $ sw$ is fully commutative, and $ \H{sw}\cong \H{w} \setminus \{ p \}$ for some minimal vertex $p\in \H{w}$ such that the $S$-label of $p$ is $s$.

        \item Let $ p_{1}, p_{2} \in \H{w}$ denote two distinct minimal vertices. Then the $S$-labels of $ p_{1}$ and $p_{2}$ must be distinct.

        \item Let $ p \in \H{w}$ be a minimal element with $ S$-label $s$. Then $ sw$ is fully commutative and $ \H{sw} \cong \H{w} \setminus \{ p  \}$.

    \end{enumerate}
\end{prop}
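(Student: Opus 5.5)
The plan is to mirror the proof of Proposition \ref{removemaxprop} with left and right interchanged, or to deduce the statement directly from it via inversion. The inversion route is cleanest. The map $w \mapsto w^{-1}$ preserves length and reverses reduced expressions. If $\textbf{\underline{w}} = (s_{1}, \dots , s_{n})$ is reduced for $w$, then $(s_{n}, \dots , s_{1})$ is reduced for $w^{-1}$. Commutation braid moves on one side correspond to commutation braid moves on the other, so $w^{-1}$ is fully commutative. Also $D_{L}(w) = D_{R}(w^{-1})$, since $\ell(sw) = \ell(w^{-1}s)$.

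The key observation is that the heap of $w^{-1}$ is the dual poset of the heap of $w$, with the same labels. The index map $i \mapsto n+1-i$ (shifting indices consistently with how the heap is indexed) sends the relation $i \prec j$ for $\textbf{\underline{w}}$ exactly to the relation $n+1-j \prec n+1-i$ for the reversed expression. This holds because the condition $m(s_{i}, s_{j}) \neq 2$ is symmetric. Taking transitive closures, $\H{w^{-1}}$ is $\H{w}$ with its order reversed, and the labels are preserved. Hence minimal vertices of $\H{w}$ are exactly the maximal vertices of $\H{w^{-1}}$. Removing a vertex commutes with dualizing, so $\H{w}\setminus\{p\}$ dualizes to $\H{w^{-1}}\setminus\{p\}$.

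With this in hand, each part follows from the corresponding part of Proposition \ref{removemaxprop} applied to $w^{-1}$.
\begin{enumerate}
    \item[(1)] By part (1) of Proposition \ref{removemaxprop}, $s \in D_{L}(w) = D_{R}(w^{-1})$ if and only if $\H{w^{-1}}$ has a maximal vertex labeled $s$. That holds if and only if $\H{w}$ has a minimal vertex labeled $s$.
    \item[(3)] This is immediate from part (3) of Proposition \ref{removemaxprop}.
    \item[(2), (4)] We have $(sw)^{-1} = w^{-1}s$. Part (4) of Proposition \ref{removemaxprop} gives that $w^{-1}s$ is fully commutative with $\H{w^{-1}s} \cong \H{w^{-1}} \setminus \{p\}$. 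Dualizing, $sw$ is fully commutative (also directly by Theorem \ref{fullcommweakorderthm}, since $sw \sweakl w$) and $\H{sw} \cong \H{w}\setminus\{p\}$. Part (2) follows by combining this with part (1).
\end{enumerate}

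The only step needing care is the duality claim. One must check that the isomorphism-class bookkeeping is compatible: an isomorphism of $S$-heaps induces an isomorphism of their duals, and the identification is independent of the chosen reduced expression by Theorem \ref{welldefthm}. Neither point is difficult.

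Alternatively, if one prefers to avoid duality, I would copy the proof of Proposition \ref{removemaxprop} verbatim with the following substitutions: take the smallest index $j$ with $s_{j} = s$ and commute $s_{j}$ leftward past all $s_{k}$ with $k<j$, which must satisfy $m(s_{j},s_{k}) = 2$ by minimality. Then remove the first letter instead of the last, and repeat the argument of part (3) using left multiplication.
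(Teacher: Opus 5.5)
Your proof is correct, but your main route differs from the paper's. The paper proves this proposition in essentially one line. It uses the fact that $s\in D_{L}(w)$ iff some reduced expression for $w$ begins with $s$, and that $sw \sweakl w$ is fully commutative by Theorem \ref{fullcommweakorderthm}. It then tells the reader to ``mirror'' the proof of Proposition \ref{removemaxprop}. That is exactly your fallback in the last paragraph: take the smallest index $j$ with $s_{j}=s$, commute it leftward, strip the first letter, and redo part (3) with left multiplication.

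Your primary argument instead isolates one lemma: $\H{w^{-1}}\cong\H{w}^{\mathrm{op}}$ as $S$-heaps, obtained by reversing a reduced word, flipping indices, and using the symmetry of $m$. Together with $D_{L}(w)=D_{R}(w^{-1})$ and the fact that inverses of fully commutative elements are fully commutative, this gives all four parts formally. You apply Proposition \ref{removemaxprop} to $w^{-1}$ and to $w^{-1}s=(sw)^{-1}$.

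What your route buys is that nothing is re-proved. ``Mirror'' becomes a precise statement, and the only new check is the easy duality verification. The lemma is also reusable: it shows the heap of an involution is self-dual. That is the fact used in Proposition \ref{joinisinvolution}, where a maximal vertex labeled $s$ in $\H{x}$ is traded for a minimal vertex labeled $s$ via $D_{R}(x)=D_{L}(x)$. The paper's route costs nothing up front, but it leaves the reader to redo every step on the left, including the slightly delicate argument for part (3).
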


\begin{proof}
    Use the general fact that $ s\in D_{L}(w)$ if and only if there is a reduced expression $ \textbf{\underline{w}} = (s_{1}, s_{2}, \dots , s_{n})$ for $ w$ where $ s_{1} = s$. Also note that if $ s\in D_{L}(w)$, then $sw \sweakl w$, and therefore $ sw$ must be fully commutative by Theorem \ref{fullcommweakorderthm}. Now mirror the proof of Proposition \ref{removemaxprop}.
\end{proof}
Let $ (W,S)$ be a Coxeter system with $ w\in W$ being fully commutative. A subset $ I \subseteq \H{w}$ is called a \emph{labeled order ideal} if for any $ q\in I$, $ p \leq_{w} q$ implies that $ p \in I$. The $S$-labeling of $ \H{w}$ naturally restricts to an $S$-labeling of $I$. Hence, $I$ can be viewed as an $S$-heap modulo isomorphism. Note that if $I, J \subseteq \H{w}$ are labeled order ideals, then $I \cup J \subseteq \H{w}$ is also a labeled order ideal.

\begin{prop} \label{existscorrprop}
    Let $ (W,S )$ be a Coxeter system, and let $ w\in W$ be a fully commutative element. Let $I \subseteq \H{w}$ be a labeled order ideal of $ \H{w}$. Then there exists a unique $ x\in W$ such that the following are true:

    \begin{enumerate}
        \item $ x \weakr w$, and $ x$ is fully commutative

        \item $ \H{x} \cong I$
    \end{enumerate}
\end{prop}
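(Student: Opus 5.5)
Existence will come from peeling maximal elements off $\H{w}$ that lie outside $I$, and uniqueness from Proposition \ref{removemaxprop}(5). We argue by induction on $|\H{w} \setminus I|$, i.e. on $\ell(w) - |I|$.

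For existence: if $I = \H{w}$, take $x = w$. Otherwise $\H{w}\setminus I$ is a nonempty finite poset, so it has a maximal element $q$ (maximal with respect to $\leq_w$ restricted to the complement). This $q$ is in fact maximal in all of $\H{w}$. Indeed, if $q <_w q'$, then $q' \notin I$, since $I$ is an order ideal and $q\notin I$; this would contradict the maximality of $q$ in the complement. Let $s$ be the $S$-label of $q$. By Proposition \ref{removemaxprop}(1), $s \in D_R(w)$. By Proposition \ref{removemaxprop}(4), $ws$ is fully commutative and $\H{ws} \cong \H{w}\setminus\{q\}$.

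Next we check that $I$ remains a labeled order ideal of $\H{w}\setminus\{q\}$. The partial order on $\H{w}\setminus\{q\}$ must be the restriction of $\leq_w$. This holds because, using a reduced expression ending in $s$, the heap of $ws$ is the heap on $[n-1]$ generated by the same covering relations, and removing the last index does not break any chains among the remaining indices, since a chain through $n$ would have to end at $n$. Also $|\H{ws}\setminus I| < |\H{w}\setminus I|$, so induction gives a fully commutative $x \weakr ws$ with $\H{x}\cong I$. Since $\ell(ws) < \ell(w)$, we have $ws \weakr w$, and by transitivity $x \weakr w$. Full commutativity of $x$ also follows from Theorem \ref{fullcommweakorderthm}.

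For uniqueness: suppose $x, x'$ both satisfy (1) and (2). Both are fully commutative and $\H{x}\cong I \cong \H{x'}$, so Proposition \ref{removemaxprop}(5) gives $x = x'$. Condition (1) is not even needed for this step.

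The main obstacle is the bookkeeping in the existence step: checking that the isomorphism $\H{ws}\cong\H{w}\setminus\{q\}$ respects the induced order, so that $I$, viewed inside $\H{ws}$, is still a labeled order ideal and is the same labeled heap. I would handle this concretely by choosing a reduced word for $w$ whose last letter is the vertex $q$. Such a word exists: take a linear extension of $\H{w}$ placing $q$ last, which is possible because $q$ is maximal, and the corresponding word is a commutation-equivalent reduced expression. With this word, everything reduces to the index-level statement that deleting the final index from $[n]$ yields exactly the restricted order.
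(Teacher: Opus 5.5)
Your proof is correct and follows the paper's argument: existence by induction on $|\H{w}\setminus I|$, removing a maximal vertex $q\notin I$ via Proposition~\ref{removemaxprop}(4) and recursing inside $\H{ws}$, and uniqueness from Proposition~\ref{removemaxprop}(5). Your extra checks fill in steps the paper simply asserts. These are that a maximal element of $\H{w}\setminus I$ is maximal in $\H{w}$, and that $\H{w}\setminus\{q\}$ carries the restricted order, so $I$ remains a labeled order ideal; the second is already implicit in part (4).
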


\begin{proof}
    If such an $ x$ exists, note that part (5) of Proposition \ref{removemaxprop} will immediately prove uniqueness. We now prove the existence of such an $ x$ via induction on $ |\H{w} \setminus I| = n$. Note that if $|\H{w} \setminus I| = 0 $, then $ I = \H{w}$ and clearly $ x =w$ satisfies the desired properties.

    Suppose now that $ |\H{w} \setminus I| = n+1$ for some  $n \in \N$. Hence, $ I \subsetneq \H{w}$. Since $I$ is an order ideal of $ \H{w}$ with $I \subsetneq \H{w}$, it follows that there must exist some maximal element $ q\in \H{w}$ such that $ q \notin I$. Let $ s$ denote the $S$-label of $q$. By part (4) of Proposition \ref{removemaxprop}, one has that $ ws \sweakr w$, $ws $ is fully commutative, and $ \H{ws} \cong \H{w} \setminus \{ q \}$. Since $I$ is a labeled order ideal of $ \H{w}$ with $ q\notin I$, $I$ is naturally viewed as a labeled order ideal of $ \H{ws}$. Hence, $I$ is a labeled order ideal of $ \H{ws}$, and $ |\H{ws} \setminus I| =n$. The induction hypothesis allows us to construct a fully commutative $ x \in W$ such that $ x \weakr ws \sweakr w$ and $ \H{x} \cong I$. This completes the induction and finishes the proof.

\end{proof}

\begin{prop} \label{mustbecompprop}
    Let $ (W,S)$ be a Coxeter system, and let $ w\in W$ be fully commutative. Let $ q_{1}, q_{2} \in \H{w}$ be two distinct elements with the same $S$-label. Then $ q_{1} \leq_{w} q_{2}$ or $ q_{2} \leq_{w} q_{1}$.
\end{prop}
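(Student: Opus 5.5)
This is essentially immediate from the definition of the heap, so the plan is to work directly with a fixed reduced expression. By Theorem \ref{welldefthm}, any reduced expression $\textbf{\underline{w}} = (s_{1}, s_{2}, \dots , s_{n})$ for $w$ gives a representative $([n], \leq_{\textbf{\underline{w}}}, f_{\textbf{\underline{w}}})$ of $\H{w}$. The relation ``$q_{1} \leq_{w} q_{2}$ or $q_{2} \leq_{w} q_{1}$'' is preserved under isomorphisms of $S$-heaps, which preserve both order and labels, so it suffices to prove the statement in this representative. Thus I identify $q_{1}, q_{2}$ with two distinct indices $i \neq j$ such that $s_{i} = s_{j} = s$. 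Without loss of generality $i < j$.

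Next I would use the generating relation $\prec$. Since $s_{i} = s_{j}$, we have $m(s_{i}, s_{j}) = m(s,s) = 1$ by property (1) of a Coxeter matrix, and in particular $m(s_{i},s_{j}) \neq 2$. Together with $i < j$, the definition of $\prec$ gives $i \prec j$. Since $\leq_{\textbf{\underline{w}}}$ is the transitive (and reflexive) closure of $\prec$, we get $i \leq_{\textbf{\underline{w}}} j$, that is, $q_{1} \leq_{w} q_{2}$. In the case $j < i$ the same argument gives $q_{2} \leq_{w} q_{1}$.

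There is no real obstacle. The only point needing care is the transfer from a chosen representative to the isomorphism class $\H{w}$: the vertices $q_{1}, q_{2}$ must be read through a fixed isomorphism with $([n], \leq_{\textbf{\underline{w}}}, f_{\textbf{\underline{w}}})$. That isomorphism and its inverse are label-preserving and order-preserving, so comparability is reflected back to $\H{w}$. Full commutativity is used only so that $\H{w}$ is defined; the comparability itself comes straight from the convention $m(s,s)=1 \neq 2$.
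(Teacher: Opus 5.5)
Your proof is correct, and it is more direct than the paper's. The paper argues by contradiction. If $q_{1},q_{2}$ were incomparable, then $I(q_{1})\cup I(q_{2})$ is a labeled order ideal, and Proposition \ref{existscorrprop} realizes it as $\H{x}$ for some $x \weakr w$. Then $q_{1},q_{2}$ would be two distinct maximal vertices of $\H{x}$ with the same label, contradicting part (3) of Proposition \ref{removemaxprop}.

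You instead read the comparability straight off the generating relation, using $m(s,s)=1\neq 2$. Your care about transferring the statement through a label- and order-preserving isomorphism is exactly the right bookkeeping.

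The paper's route uses only structural facts about heaps: labeled order ideals are the heaps of elements below $w$ in the weak order, and maximal vertices carry distinct labels. So it would survive a convention in which the generating relation links only \emph{distinct} non-commuting generators. Under that convention your argument would need one extra step: reducedness forces some $k$ with $i<k<j$ and $m(s_{k},s)\geq 3$ between two occurrences of $s$, which gives $i\prec k\prec j$.

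Under the paper's stated definition, however, your route is strictly simpler. In fact the paper's proof of part (1) of Proposition \ref{removemaxprop} already uses your observation implicitly. It asserts that every vertex labeled $s$ fails to be maximal once the last occurrence of $s$ lies below some $s_{k}$, and that relies on each occurrence of $s$ lying below the last one.
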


\begin{proof}
    Suppose for the sake of contradiction that $ q_{1}$ and $ q_{2}$ were incomparable with respect to $ \leq_{w}$. For $j =1,2$, define

    $$ I(q_{j}) : = \{  p\in \H{w} \mid p \leq_{w} q_{j} \}$$
    Note that $I(q_{1})$ and $ I(q_{2})$ are labeled order ideals of $ \H{w}$. Since the union of labeled order ideals is a labeled order ideal, it follows that $ J : = I(q_{1}) \cup I(q_{2})$ is a labeled order ideal of $ \H{w}$. By Proposition \ref{existscorrprop}, there must exist a unique $ x \weakr w$ such that $\H{x} \cong J$. But since $ q_{1}$ and $ q_{2}$ are incomparable with respect to $\leq_{w}$, it follows that $q_{1}$ and $q_{2}$ are distinct maximal elements of $ J \cong \H{x}$ with the same $S$-label. This contradicts part (3) of Proposition \ref{removemaxprop}.
\end{proof}

\begin{prop} \label{equividealprop}
    Let $ (W,S)$ be a Coxeter system. Let $ w\in W$ be a fully commutative element. Let $x\in W $. Then the following two conditions are equivalent:

    \begin{enumerate}
        \item $ x \weakr w$

        \item $ x$ is fully commutative, and there exists a labeled order ideal $ I \subseteq \H{w}$ such that $ \H{x} \cong I$ (as $S$-heaps modulo isomorphism).
    \end{enumerate}
\end{prop}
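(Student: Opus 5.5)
The plan is to prove the two implications separately. The direction (2)$\Rightarrow$(1) should follow almost immediately from Proposition \ref{existscorrprop} together with part (5) of Proposition \ref{removemaxprop}. The direction (1)$\Rightarrow$(2) I would prove by induction on $\ell(w)-\ell(x)$, peeling off maximal vertices of $\H{w}$ using part (4) of Proposition \ref{removemaxprop}.

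\emph{(2)$\Rightarrow$(1).} Suppose $x$ is fully commutative and $\H{x}\cong I$ for some labeled order ideal $I\subseteq \H{w}$. By Proposition \ref{existscorrprop} there is a fully commutative $x'\weakr w$ with $\H{x'}\cong I$. Hence $\H{x}\cong \H{x'}$. By part (5) of Proposition \ref{removemaxprop}, $x=x'$, and so $x\weakr w$.

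\emph{(1)$\Rightarrow$(2).} Suppose $x\weakr w$. By Theorem \ref{fullcommweakorderthm}, $x$ is fully commutative, so it remains to produce the ideal $I$. If $\ell(w)=\ell(x)$, then $x=w$ and we take $I=\H{w}$. Otherwise $w = x s_{k+1}\cdots s_{n}$ with lengths adding, so $s:=s_n\in D_R(w)$ and $x\weakr ws\sweakr w$. By part (1) of Proposition \ref{removemaxprop} there is a maximal vertex $q\in\H{w}$ with label $s$. By part (4), $ws$ is fully commutative and $\H{ws}\cong \H{w}\setminus\{q\}$.

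Since $\ell(ws)-\ell(x)<\ell(w)-\ell(x)$, induction gives a labeled order ideal $I\subseteq \H{ws}$ with $\H{x}\cong I$. Transport $I$ along the isomorphism into $\H{w}\setminus\{q\}$. It is then an order ideal of $\H{w}$: if $p\le_w r$ with $r\in I$, then $p\neq q$, because $q$ is maximal and $r\neq q$. So $p$ lies in $\H{w}\setminus\{q\}$ and is below $r$ there, hence $p\in I$.

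The main obstacle is that last step, namely checking that the order on $\H{w}\setminus\{q\}$ is the order induced from $\H{w}$. The isomorphism in part (4) must be an isomorphism onto the induced subposet, not merely onto some coarser order on the same set. I would verify this through the construction in the proof of part (2). Choose a reduced word for $w$ ending in $s$ whose last letter realizes $q$. Then $\leq_{\underline{\mathbf{ws}}}$ on $[n-1]$ is the restriction of $\leq_{\underline{\mathbf{w}}}$. This holds because any chain witnessing $i\le j$ with $i,j<n$ never passes through the maximal index $n$: every step of the generating relation goes to a larger index, so a chain that reached $n$ could not come back down to $j<n$.
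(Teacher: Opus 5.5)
Your proposal is correct and follows essentially the paper's route. For (2)$\Rightarrow$(1) you use Proposition \ref{existscorrprop} together with part (5) of Proposition \ref{removemaxprop}; for (1)$\Rightarrow$(2) you strip maximal vertices off $\H{w}$ one letter at a time along a reduced word for $w$ extending one for $x$. The paper writes this as an explicit chain $w_k,\dots,w_n$ using part (2), while you phrase it as an induction using parts (1) and (4), and your check that $\leq_{\textbf{\underline{ws}}}$ is the restriction of $\leq_{\textbf{\underline{w}}}$ makes explicit a detail the paper leaves implicit.
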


\begin{proof}
    Suppose that $ x \weakr w$. Let $ \textbf{\underline{x}} = (s_{1}, s_{2}, \dots , s_{k})$ denote a reduced expression for $ x$. Since $ x\weakr w$, it follows that we can extend $\textbf{\underline{x}}$ to a reduced expression for $w$, or in other words, there must exist an $n \geq k$ such that $ \textbf{\underline{w}} = (s_{1}, s_{2}, \dots , s_{k}, \dots, s_{n})$. For $ j \in \N$ such that $ k \leq j \leq n$, define $ w_{j} : = s_{1}s_{2} \dots s_{j}$. Thus, we get a sequence of elements $ w_{k}, w_{k+1}, \dots , w_{n-1}, w_{n}$ where $w_{k} : = x$ and $ w_{n}: = w$. Because $ w_{j} \weakr w$ for all $ j$ such that $ k \leq j \leq n$, it follows from Theorem \ref{fullcommweakorderthm} that $ w_{j}$ is fully commutative for all $ j$ such that $ k \leq j \leq n$ (and in particular, $ w_{k} = x$ is fully commutative). Furthermore, note that for $ k < j \leq n$, we have $ w_{j-1} = w_{j}s_{j}$ where $ s_{j} \in D_{R}(w_{j})$. Hence, by Proposition \ref{removemaxprop} part (2), it follows that $ \H{w_{j-1}} \cong \H{w_{j}} \setminus \{  q_{j} \}$ where $ q_{j}$ is some maximal vertex of $ \H{w_{j}}$. Thus, one deduces that $ \H{x}$ is obtained from $ \H{w}$ by successively removing maximal elements. Therefore, $ \H{x}$ must be a labeled order ideal of $ \H{w}$.

    Suppose now that $ x$ is fully commutative and that there exists a labeled order ideal $ I \subseteq \H{w}$ such that $ \H{x} \cong I$. By Proposition \ref{existscorrprop}, one can construct a fully commutative $y \weakr w$ such that $ \H{y} \cong I$. But because $ \H{y} \cong I \cong \H{x}$, Proposition \ref{removemaxprop} part (5) implies that $ x = y\weakr w$.
\end{proof}

\begin{defi}
    Let $ (W,S)$ be a Coxeter system, and let $w\in W$ be fully commutative. Let $ p,q \in \H{w}$. We define a relation $ p \vartriangleleft_{w} q$ to mean that $ p <_{w} q$ and that there does not exist any $ z \in \H{w}$ such that $ p <_{w} z <_{w} q $. If $ p \vartriangleleft_{w} q$, we say that $ q$ \emph{covers} $ p$.
\end{defi}

\begin{defi}
    Let $ (W,S)$ be a Coxeter system, and let $w\in W$ be fully commutative. Let $ q \in \H{w}$. \emph{A chain defined at $ q$} is a sequence of coverings

    $$ p_{1} \vartriangleleft_{w} p_{2} \vartriangleleft_{w} p_{3} \vartriangleleft_{w} \dots \vartriangleleft_{w} p_{n-1} \vartriangleleft_{w}p_{n}  = q$$
    The \emph{length} of such a chain is defined to be $n$. The length of the empty chain is defined to be $0$.
\end{defi}

\begin{defi}
    Let $ (W,S)$ be a Coxeter system, and let $w\in W$ be fully commutative. Let $ q \in \H{w}$. \emph{A chain of maximal length defined at $q$} is a chain defined at $q$ whose length is maximal (among the chains defined at $q$).
\end{defi}
Let $ w\in W$ be fully commutative. Let $ q \in \H{w}$. Since $ \H{w}$ is a finite poset, it follows that a chain of maximal length defined at $q$ exists, but such a chain need not be unique.

\begin{prop} \label{uniqueidealprop}
    Let $ (W,S)$ be a Coxeter system. Let $ w\in W$ be a fully commutative element. Let $x\in W $. If $ x \weakr w$, then $x$ is fully commutative and there exists a \emph{unique} order ideal $I \subseteq \H{w}$ such that $ \H{x} \cong I$.
\end{prop}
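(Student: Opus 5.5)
Existence and full commutativity are already given by Proposition \ref{equividealprop}, so only uniqueness needs an argument. The plan is to show that an order ideal $I \subseteq \H{w}$ is determined by its isomorphism class as an $S$-heap. By Proposition \ref{mustbecompprop}, all vertices of $\H{w}$ with a fixed label $s$ form a chain under $\leq_{w}$; write them as $q^{s}_{1} <_{w} q^{s}_{2} <_{w} \dots <_{w} q^{s}_{k_{s}}$. If $I$ is an order ideal and $q^{s}_{j} \in I$, then $q^{s}_{i} \in I$ for all $i \leq j$. Hence $I \cap f_{w}^{-1}(s) = \{ q^{s}_{1}, \dots , q^{s}_{m_{s}} \}$, where $m_{s} = |I \cap f_{w}^{-1}(s)|$. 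It follows that $I$ is completely determined by the numbers $m_{s}$ for $s \in S$.

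Now suppose $I, J \subseteq \H{w}$ are order ideals with $I \cong \H{x} \cong J$ as $S$-heaps. An isomorphism of $S$-heaps is a label-preserving bijection, so for every $s \in S$ the number of vertices labeled $s$ in $I$ equals that in $J$. Thus $m_{s}(I) = m_{s}(J)$ for all $s$, and the previous paragraph gives $I = J$ as subsets of $\H{w}$.

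One subtlety is that $\H{w}$ is only defined up to isomorphism, so the phrase ``unique order ideal'' should be read inside a fixed representative $([n], \leq_{\textbf{\underline{w}}}, f_{\textbf{\underline{w}}})$. The counting argument above takes place entirely inside that fixed poset and needs nothing further. The only real step is the observation that equally-labeled vertices are totally ordered, which is exactly Proposition \ref{mustbecompprop}. I do not expect any serious obstacle beyond stating carefully that isomorphic labeled heaps have the same label multiplicities.
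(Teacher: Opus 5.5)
Your proof is correct, but it reaches uniqueness by a different and shorter route than the paper.

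\textbf{The paper's route.} It argues by contradiction. Assuming $I \neq J$, it picks $q_{I}$ maximal in $I$ with $q_{I} \notin J$ and sets $q_{J} = \phi(q_{I})$ for an $S$-heap isomorphism $\phi : I \rightarrow J$. It then uses Proposition \ref{mustbecompprop} and the order-ideal property of $J$ to force $q_{J} <_{w} q_{I}$. The contradiction comes from the length of a longest chain ending at a vertex. This length is preserved by $\phi$, since $\phi$ preserves coverings. It is the same whether computed in an order ideal or in $\H{w}$. But $q_{J} <_{w} q_{I}$ makes it strictly larger at $q_{I}$.

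\textbf{Your route.} You apply the comparability of equally-labeled vertices to every label class at once. Each $f_{w}^{-1}(s)$ is a chain, and an order ideal meets it in an initial segment, so an order ideal is determined by its label multiplicities.

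\textbf{What each buys.} Your argument uses only that an isomorphism is a label-preserving bijection; its order-preservation and the covering relation are never needed. It also proves something stronger: two order ideals of $\H{w}$ with the same number of vertices of each label coincide. Combined with part (5) of Proposition \ref{removemaxprop}, this means an element $x \weakr w$ is determined by how many times each generator occurs in a reduced word for $x$. Your key input is also immediate from the definition of the heap: $m(s,s) = 1 \neq 2$ makes any two occurrences of the same letter directly comparable under $\leq_{\textbf{\underline{w}}}$. The paper's depth argument, by contrast, transports the full heap structure through $\phi$ and uses comparability of only the single pair $q_{I}, q_{J}$.

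\textbf{A citation point.} For existence and full commutativity, direction (1) $\Rightarrow$ (2) of Proposition \ref{equividealprop} is the right reference, as you have it. The paper cites Proposition \ref{existscorrprop}, which goes in the converse direction.
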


\begin{proof}
    Since $ x \weakr w$, Proposition \ref{existscorrprop} implies that $x$ is fully commutative and that $ \H{x} \cong I$ for some labeled order ideal $I \subseteq \H{w}$. We just need to prove the uniqueness of $I \subseteq \H{w}$.

    Suppose for the sake of contradiction that $I, J \subseteq \H{w}$ are two distinct labeled order ideals of $ \H{w}$ such that $ \H{x} \cong I$ and $ \H{x} \cong J$. Since $I$ and $J$ are distinct labeled order ideals of $ \H{w}$, it follows that at least one of the following must be true: $ I \nsubseteq J$ or $ J \nsubseteq I$. Without loss of generality, let us assume that $ I \nsubseteq J$. Again, since we are dealing with labled order ideals, the condition $ I \nsubseteq J$ implies that there must exist some $ q_{I}\in I \setminus J$ which is maximal in $I$. Since $I \cong \H{x} \cong J$, let $ \phi: I \rightarrow J$ denote an isomorphism of $S$-heaps. Define $ q_{J} : = \phi(q_{I}) \in J$. Since $ q_{I}, q_{J} \in \H{w}$ have the same $S$-label, it follows from Proposition \ref{mustbecompprop} that either $ q_{I} \leq_{w} q_{J}$ or $ q_{J} \leq_{w} q_{I}$. But note that if $ q_{I} \leq_{w} q_{J}$, then because $ J$ is an order ideal of $ \H{w}$, it would follow that $ q_{I} \in J$, which contradicts $ q_{I} \in I \setminus J$. Hence, we deduce that $ q_{J} <_{w} q_{I}$ in $\H{w}$. Note that since $ I$ is a labeled order ideal of $ \H{w}$, it follows that any chain defined at $ q_{I}$ in $ \H{w}$ can be viewed as a chain defined at $ q_{I}$ in $I$, and vice versa. Similarly, since $J$ is a labeled order ideal of $ \H{w}$, any chain defined at $ q_{J}$ in $\H{w}$ can be viewed as a chain defined at $ q_{J}$ in $J$, and vice versa. Since $ q_{J} : = \phi(q_{I})$ where $ \phi: I \rightarrow J$ is an isomorphism of $ S$-heaps, it follows that there is a bijective correspondence between chains of maximal length defined at $ q_{I}$ in $I$ and chains of maximal length defined at $ q_{J}$ in $J$ (since isomorphisms of $S$-heaps preserve the covering relation). More specifically, the length of a chain of maximal length defined at $ q_{I}$ in $I$ should have the same length as a chain of maximal length defined at $ q_{J}$ in $J$. Let $n$ denote the length of a chain of maximal length defined at $ q_{J}$ in $J$ (or equivalently, the length of a chain of maximal length defined at $q_{I}$ in $I$). But recall that since $ I$ and $J$ are order ideals of $ \H{w}$, chains of maximal length defined at $q_{I}$ in $ I$ and $ q_{J}$ in $J$ can be viewed as chains of maximal length defined at $ q_{I}$ in $\H{w}$ and $ q_{J}$ in $ \H{w}$ respectively. Hence, we deduce that any chain of maximal length defined at $ q_{I}$ in $ \H{w}$ should have the same length $ n$ as a chain of maximal length defined at $ q_{J}$ in $\H{w}$. But recall that $ q_{J} <_{w} q_{I}$ in $ \H{w}$. Thus, any chain defined at $ q_{J}$ can be extended to an even longer chain defined at $ q_{I}$. In other words, if we take a chain of maximal length defined at $ q_{J}$ in $\H{w}$ (whose length is $n$), we would be able to extend it to a chain defined at $ q_{I}$ in $ \H{w}$ whose length is strictly greater than $n$. But chains of maximal length defined at $ q_{I}$ in $\H{w}$ should have length exactly $n$. This is a contradiction, so we must conclude that $ I = J$.
\end{proof}

\begin{prop} \label{joinidealprop}
    Let $ (W,S)$ be a Coxeter system. Let $ w\in W$ be fully commutative. Suppose that there exist fully commutative elements $ x,y \in W$ and labeled order ideals $ I , J \subseteq \H{w}$ such that

    \begin{itemize}
        \item $ \H{x} \cong I$ and $ \H{y} \cong J$

        \item $ \H{w} = I \cup J$
    \end{itemize}
    Then $ w = x \vee y$ where $ \vee $ denotes the join in the weak right order of $(W,S)$.
\end{prop}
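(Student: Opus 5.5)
The plan is to use reflection cocycles (left inversion sets) rather than heaps for the comparison with an arbitrary upper bound. The difficulty is that an arbitrary upper bound $u$ of $x$ and $y$ need not be fully commutative, so the heap machinery of Propositions \ref{removemaxprop}--\ref{uniqueidealprop} cannot be applied to $u$.

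First, $x\weakr w$ and $y \weakr w$ follow directly from Proposition \ref{equividealprop}, since $\H{x}\cong I$ and $\H{y}\cong J$ are labeled order ideals of $\H{w}$. It then remains to show that every $u\in W$ with $x\weakr u$ and $y\weakr u$ satisfies $w\weakr u$. For this I will use the standard characterization of the weak right order by the cocycle $N$ of Theorem \ref{dyerthm}:
$$a\weakr b \iff N(a)\subseteq N(b).$$
The inclusion $N(a)\subseteq N(b)$ is equivalent to $N(a^{-1}b)$ being disjoint from $a^{-1}N(a)a$, via $N(b)=N(a)\,\Delta\, aN(a^{-1}b)a^{-1}$ and $|N(v)|=\ell(v)$. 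Given this, it suffices to prove
$$N(w)\subseteq N(x)\cup N(y).$$
Indeed, $N(x)\cup N(y)\subseteq N(u)$ for any common upper bound $u$, so $N(w)\subseteq N(u)$ and hence $w\weakr u$.

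To prove the inclusion, attach to each vertex $q$ of $\H{w}$ a reflection $t_q$. Take a reduced expression $(s_1,\dots,s_n)$ of $w$, i.e.\ a linear extension of $\H{w}$. If $q$ is the $i$-th vertex, set
$$t_q=s_1\cdots s_{i-1}s_is_{i-1}\cdots s_1 .$$
The standard fact is that $N(w)=\{t_q : q\in\H{w}\}$; this follows from the cocycle identity applied letter by letter. The key claim is that $t_q$ depends only on $q$ and not on the chosen linear extension.

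To justify the key claim, note that any two linear extensions are related by swapping adjacent incomparable vertices. Adjacent incomparable vertices carry commuting labels $s,s'$ with $m(s,s')=2$. Such a swap fixes $t_p$ for every vertex other than the two swapped ones. For the two swapped vertices it gives the same pair of reflections, because $s's\,s'=s$ when $ss'=s's$. So the $t_q$ are well defined.

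Now, since $I$ is an order ideal, there is a linear extension of $\H{w}$ listing $I$ first. The initial segment is then a reduced expression for $x$: by Proposition \ref{existscorrprop} and the uniqueness in Proposition \ref{uniqueidealprop}, the element with heap $I$ below $w$ is $x$. Hence $\{t_q : q\in I\}=N(x)$, and similarly $\{t_q: q\in J\}=N(y)$. Since $\H{w}=I\cup J$, this gives $N(w)=N(x)\cup N(y)$.

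I expect the main obstacle to be the bookkeeping in the independence of $t_q$ from the linear extension, together with carefully identifying the initial segment word with $x$. The argument also relies on the inversion-set characterization of $\weakr$. That characterization is not stated earlier in the paper but is a standard consequence of Theorem \ref{dyerthm}, and I will include a short derivation of it.
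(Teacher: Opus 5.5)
Your argument is correct, but it takes a different route from the paper's. The paper first uses the fact that the weak right order is a complete meet-semilattice to get that $x\vee y$ exists with $x\vee y\weakr w$. It then argues by contradiction. If $x\vee y\sweakr w$, pick $s\in D_R(w)$ with $x\vee y\weakr ws$; the unique maximal $s$-labelled vertex $q$ lies in, say, $I$. Then $x\weakr ws$ yields a second labeled order ideal $K\subseteq\H{ws}\subseteq\H{w}$ with $\H{x}\cong K\neq I$, contradicting Proposition \ref{uniqueidealprop}.

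You never need existence of the join. You prove the sharper identity $N(w)=N(x)\cup N(y)$, and the inversion-set characterization of $\weakr$ (which the paper records only later, as Proposition \ref{weakrequivreflcocycle}) then shows directly that $w$ lies below every common upper bound.

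The trade-off is as follows. The paper's proof is shorter given the heap machinery already built, but it relies on the semilattice theorem and on the chain-length argument behind Proposition \ref{uniqueidealprop}. Yours relies instead on two ingredients: connectivity of linear extensions under adjacent swaps of incomparable vertices, and the check that such a swap preserves each vertex's reflection.

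A genuine bonus of your approach: it shows that $q\mapsto t_q$ is a well-defined bijection $\H{w}\to N(w)$ for every fully commutative $w$ in any Coxeter system. Distinctness of the $t_q$ follows from $|N(w)|=\ell(w)$. For a principal ideal $I(q)$, listed first with $q$ last, your $t_q$ is exactly the reflection of Proposition \ref{welldefreflection}. So you recover Proposition \ref{racsreflbijection} without the right-angled hypothesis and without the conjugacy-of-simple-reflections argument.

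Two small points to tidy up:
\begin{enumerate}
\item When identifying the prefix word with $x$, what you need is uniqueness of the \emph{element} with heap $I$ (Proposition \ref{existscorrprop}, or part (5) of Proposition \ref{removemaxprop}), not uniqueness of the ideal. You also need the observation that the heap of a prefix word is the induced subposet, since chains in the transitive closure have increasing indices.
\item In the swap computation, say explicitly that each of the two swapped vertices keeps its own reflection, not merely that the pair of reflections is preserved.
\end{enumerate}
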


\begin{proof}
    By Proposition \ref{equividealprop}, we conclude that $ x\weakr w$ and $ y \weakr w$. Since the weak right order is a complete meet-semilattice, the conditions $ x \weakr w$ and $ y \weakr w$ imply that $ x \vee y$ exists and $ x \vee y \weakr w$ (and hence $ x\vee y$ is fully commutative by Theorem \ref{fullcommweakorderthm}). 
    
    If $ x \vee y = w$, then we are done. So suppose for the sake of contradiction that $ x\vee y \neq w$, and hence $ x \vee y \sweakr w$. Because $ x \vee y \sweakr w$, it follows that there exists an $ s\in D_{R}(w)$ such that $ x\vee y \weakr ws$. By Proposition \ref{removemaxprop} part (4), we have that $ \H{ws} \cong \H{w} \setminus \{ q \}$ where $q\in \H{w}$ is the unique maximal element of $ \H{w}$ such that the $S$-label of $ q$ is $s$. Thus, we can naturally view $ \H{ws}$ as a proper labeled order ideal of $ \H{w}$ where $ q \notin \H{ws}$. Note that since $ \H{w} = I \cup J$, it follows that $ q \in I$ or $ q\in J$. Without loss of generality, let us assume that $ q\in I$. But because $ x \weakr x\vee y \weakr ws$, it follows from Proposition \ref{equividealprop} that there exists some labeled order ideal $K \subseteq \H{ws} \subseteq \H{w}$ such that $ \H{x} \cong K$. But note that $ K \neq I$ since $ q\in I$ but $ q\notin K$ since $ K \subseteq \H{ws}$ and $ q\notin \H{ws}$. $I$ and $K$ are two distinct labeled order ideals of $ \H{w}$ such that $ \H{x} \cong I$ and $ \H{x} \cong K$. This contradicts Proposition \ref{uniqueidealprop}. Hence, $ w = x\vee y$.
\end{proof}

\begin{prop} \label{joinisinvolution}
    Let $ (W,S)$ be a Coxeter system, and let $ w\in W$ be fully commutative. Let $ x,y\in W$ be such that $ x \weakr w$ and $ y\weakr w$. Suppose further that $ x^{2} = 1$ and $ y^{2}=1$. Then $ x\vee y$ exists and $ (x \vee y)^{2}  =1$.
\end{prop}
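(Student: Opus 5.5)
The existence of $x \vee y$ is immediate: $x \weakr w$ and $y \weakr w$, and the weak right order is a complete meet-semilattice, so a common upper bound forces the join to exist. Set $z := x \vee y$. Then $z \weakr w$, so $z$ is fully commutative by Theorem \ref{fullcommweakorderthm}.

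The first real step is to identify $\H{z}$ inside $\H{w}$. By Proposition \ref{uniqueidealprop} there are unique labeled order ideals $I, J \subseteq \H{w}$ with $\H{x} \cong I$ and $\H{y} \cong J$. The set $I \cup J$ is again a labeled order ideal of $\H{w}$. Proposition \ref{existscorrprop} gives a fully commutative $u \weakr w$ with $\H{u} \cong I \cup J$. Since $I$ and $J$ are order ideals of $I \cup J \cong \H{u}$, Proposition \ref{joinidealprop} applied to $u$ gives $u = x \vee y = z$. Hence $\H{z} \cong I \cup J$.

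It remains to show $z = z^{-1}$. The plan is to use heap duality. If $v$ is fully commutative with reduced expression $(s_1, \dots, s_n)$, then $(s_n, \dots, s_1)$ is a reduced expression for $v^{-1}$. The relation defining $\leq$ only depends on $m(s_i,s_j) \neq 2$ and on the order of indices, so $\H{v^{-1}}$ is the dual heap $\H{v}^{op}$. By Proposition \ref{removemaxprop}(5), $v^2 = 1$ holds if and only if $\H{v} \cong \H{v}^{op}$ as $S$-heaps. So we are given label-preserving anti-automorphisms $\sigma_I$ of $I$ and $\sigma_J$ of $J$, and we must produce one for $I \cup J$.

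To make this rigid I would use Proposition \ref{mustbecompprop}. In any heap $\H{v}$ the vertices with a fixed label $s$ form a chain, say of $k_s$ elements, indexed $1, \dots, k_s$ from the bottom. A label-preserving bijection $\H{v} \to \H{v}$ is then forced to send the $i$-th $s$-vertex to the $i$-th $s$-vertex, and an anti-automorphism is forced to send it to the $(k_s+1-i)$-th. So self-duality of $\H{v}$ says exactly that the map $\tau: (s,i) \mapsto (s, k_s+1-i)$ reverses the order. In $I \cup J$ the $s$-chain has length $\max(k_s^I, k_s^J)$, because $I$ and $J$ are both initial segments of the $s$-chain of $\H{w}$. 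The candidate anti-automorphism of $I \cup J$ is therefore $\tau$ computed with these maxima.

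The main obstacle is verifying that this $\tau$ reverses all cover relations of $I \cup J$, including covers between an element of $I \setminus J$ and an element of $J \setminus I$. There the self-dualities of $I$ and $J$ use different $k_s$ and do not obviously glue. The first attempt would be to reduce to covers $p \vartriangleleft q$ with $m(f(p), f(q)) \neq 2$. Consecutive occurrences of two non-commuting labels $r,s$ alternate along a chain in $\H{w}$. Using the self-duality of $I$ and of $J$, I would show that for each non-commuting pair $\{r,s\}$ the alternating $r,s$-word in $I$ (and in $J$) is a palindrome. The longer of two palindromic prefixes of the same alternating word is again palindromic, which would give order reversal in $I \cup J$. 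If this palindrome argument does not go through, the fallback is induction on $\ell(w)$: remove a maximal vertex of $I \cup J$ lying in only one of $I$, $J$, together with its mirror minimal vertex, conjugating $x$ or $y$ by the corresponding generator using Proposition \ref{removeminprop}.
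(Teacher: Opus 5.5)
Your argument is correct, and it takes a genuinely different route from the paper.

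\textbf{The step you flagged closes.} The obstacle you flag resolves directly, so you do not need the fallback induction. A cover $p \vartriangleleft q$ in a heap is a single generating relation. Hence $m(f(p),f(q))\neq 2$, and $p,q$ both lie on the chain of vertices labelled in $\{f(p),f(q)\}$. A label-preserving anti-automorphism of $\H{v}$ must restrict to the reversal of each such chain. So $v^2=1$ if and only if, for every pair $r,s$ with $m(r,s)\neq 2$, the word in $\{r,s\}$ read along that chain is a palindrome; in that case your $\tau$ reverses every cover. Because $I$ and $J$ are order ideals of $\H{w}$, their $\{r,s\}$-chains are initial segments of one chain of $\H{w}$. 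The $\{r,s\}$-chain of $I\cup J$ is therefore simply the longer of the two, hence a palindrome. It follows that $\tau$ is an anti-automorphism of $I\cup J\cong\H{x\vee y}$.

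\textbf{One correction.} The labels $r,s$ need not alternate along this chain. For example, in a right-angled system with $m(r,s)=m(s,t)=\infty$ and $m(r,t)=2$, the element $rsts$ has $\{r,s\}$-word $rss$. Alternation is never actually used, though: palindromicity of an arbitrary word in $\{r,s\}$ is all the argument needs.

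\textbf{Comparison with the paper.} The paper proves $(x\vee y)^2=1$ by induction on $\ell(x)+\ell(y)$. It takes a maximal, non-minimal vertex $q\in I$ of $\H{x\vee y}=I\cup J$ with label $s$, and uses $D_R(x)=D_L(x)$ to produce a minimal vertex $p$ with the same label. It then splits into three cases according to how $p$ sits in $J$. In each case it conjugates $x\vee y$ and $x$ by $s$, replaces $y$ by $y$, $ys$ or $sys$, and reapplies Proposition \ref{joinidealprop}. Your approach gives a local criterion for a fully commutative element to be an involution, and that criterion is visibly monotone under unions of order ideals; this removes the case analysis entirely. The paper's approach stays within the vertex-removal tools of Propositions \ref{removemaxprop} and \ref{removeminprop}, and never needs the duality $\H{v^{-1}}\cong\H{v}^{\mathrm{op}}$.
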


\begin{proof}
    Since $ x \weakr w$ and $ y \weakr w$, there exists labeled order ideals $I, J \subseteq \H{w}$ such that $\H{x} \cong I $ and $ \H{y} \cong J$ by Proposition \ref{equividealprop}. The union of two labeled order ideals is also a labeled order ideal, so $I \cup J \subseteq \H{w}$ is a labeled order ideal. By Proposition \ref{existscorrprop}, there exists some $ z \weakr w$ such that $ \H{z} \cong I \cup J$. By Proposition \ref{joinidealprop}, we must have that $ z = x \vee y$. Since $ \H{x\vee y} \cong I \cup J$, let us naturally view $ I$ and $J$ as labeled order ideals of $ \H{x\vee y}$ and thus $ \H{x \vee y} = I \cup J$.

    We now prove the claim that $ (x\vee y)^{2} = 1$ via induction on $n= \ell(x) + \ell(y)$. Note that if $ n = 0$, then $ x=1$ and $ y=1$, and hence $ x\vee y = 1$, and thus the desired conclusion is true.

    Suppose now that the claim holds for all $ k =0,1,2,  \dots , n$ where $ k = \ell(x) + \ell(y)$. We wish to prove that the claim holds for $ n+1 = \ell(x) + \ell(y)$. Note that because $ \ell(x) + \ell(y) = n+1$ where $n\in \N$, it follows that at least one of $ x$ or $y$ is not the identity, and hence $ x\vee y$ is not the identity. Thus, $ \H{x \vee y} \neq \emptyset$. 
    
    Let us suppose that all vertices of $ \H{x \vee y}$ are minimal. By Proposition \ref{removeminprop} part (3), this would imply that each vertex has a distinct $S$-labeling. Furthermore, by definition of $ \leq_{x\vee y}$, if $ r$ and $s$ were the $S$-labels of distinct vertices in $ \H{x\vee y}$, then because all vertices are minimal, we would necessarily have that $ m(r,s)  =2$ (if $m(r,s)\neq 2$, then this would imply that one vertex is strictly greater than the other with respect to $ \leq_{x\vee y}$, contradicting minimality of all vertices). But because $ m(r,s) = 2$, this implies that $ (rs)^{2} = 1$ and hence $ rs=sr$. Thus, if all vertices of $ \H{x\vee y}$ are minimal, then $ x\vee y$ would be a product of distinct simple reflections that pairwise commute. Therefore, we would have that $ (x\vee y)^{2} = 1$. Thus, from now on, we can assume that not all vertices of $ \H{x\vee y}$ are minimal. In particular, since we are dealing with finite posets, there must exist some maximal vertex $ q \in \H{x\vee y}$ such that $ q$ is not minimal in $ \H{x\vee y}$. Since $ \H{x\vee y} = I \cup J$, we must have that $ q\in I$ or $ q\in J$. Without loss of generality, let us assume that $ q\in I$. From now on, let $ s\in S$ denote the $S$-labeling of $q$. Since $ q$ is maximal in $ \H{x\vee y} = I \cup J$ where $ q\in I$, it follows that $ q$ is also maximal in $I$. But if $ q$ is maximal in $I$ and $ \H{x} \cong I$, then it follows that $ s\in D_{R}(x)$ by Proposition \ref{removemaxprop} part (1). But since $ x^{2}=1$, we have that $ D_{R}(x) = D_{L}(x)$, so we conclude that there must exist some minimal vertex $ p \in I$ such that the $S$-label of $p$ is $s$. Because $ I$ is an order ideal of $\H{x\vee y}$, it follows that $p$ is also minimal in $ \H{x\vee y}$. Note that $ p \neq q$ since $ q$ is assumed to be maximal but not minimal in $ \H{x\vee y}$. Furthermore, since $p$ and $ q$ have the same $S$-label in $ \H{x\vee y}$ it follows that $p <_{x\vee y} q$ by Proposition \ref{mustbecompprop}.

    \textbf{Case 1: $p\notin J$}

    Suppose that $ p \notin J$. Not that since $J$ is a labeled order ideal, and because $ p <_{x\vee y} q$, it follows that $ q\notin J$. Hence, $ \H{x\vee y} \setminus \{ p,q \} = (I \setminus \{  p,q \} ) \cup J$. By applying Proposition \ref{removemaxprop} part (4) and then applying Proposition \ref{removeminprop} part (4), we deduce that $ \H{s(x\vee y)s} \cong \H{x\vee y} \setminus \{ p,q \} $. Note also that $ (I \setminus \{  p,q \} )$ and $J$ are still labeled order ideals of $\H{s(x\vee y)s} \cong \H{x\vee y} \setminus \{ p,q \} $. Furthermore, by applying Proposition \ref{removemaxprop} part (4) and then applying Proposition \ref{removeminprop} part (4), we deduce that $ \H{sxs} \cong I \setminus \{ p,q \}$. We still have $ \H{y} \cong J$. Thus, since $ \H{s(x\vee y )s} = (I \setminus \{  p,q \} ) \cup J$, we can apply proposition \ref{joinidealprop} to conclude that $ s(x\vee y)s = (sxs)\vee y$. But note that $ s(x\vee y)s$ is a fully commutative element where $ sxs \weakr s(x\vee y)s$, $ y \weakr s(x\vee y)s$, $ (sxs)^{2}=1$, $ y^{2}=1$, and $ \ell(sxs) + \ell(y) = \ell(x) -2 +\ell(y) =n+1 - 2 = n-1< n+1$. Hence, we can apply induction to conclude that $ (s(x\vee y)s)^{2} = 1$, which implies $ (x\vee y)^{2} = 1$. 

    \textbf{Case 2: $ p \in J$ and $ p$ is maximal in $J$}

    Suppose now that $ p\in J$ and $ p$ is maximal in $J$. Since $p$ is maximal in $J$, $ q\notin J$. Note also that because $ p $ is minimal in $\H{x\vee y}$, it follows that $ p$ is minimal in $J$. But because $p$ is both maximal and minimal in $J$, it follows that $p$ is incomparable to all other vertices of $ J$. Let $ r$ denote the $S$-label of some arbitrary vertex in $ J \setminus \{ p \}$. Note that because $ p$ is incomparable to all vertices in $ J \setminus \{ p \}$, it follows that $ m(r,s) = 2$ (for if $ m(r,s) \neq 2$ for some $ r$ where $r$ is an $S$-label of some vertex in $ J \setminus \{ p \}$, then $ p$ would be comparable to some element in $ J \setminus \{ p \}$ by definition of $ \leq_{x\vee y}$). Since $ \H{y} \cong J$, this implies that in any given reduced expression for $y$, the simple reflection $ s$ will commute with any other simple reflection appearing in that reduced expression. In particular, $ sy=ys$. Note that since $ p\in J$ and $ q\notin J$, we have that $ \H{x\vee y} \setminus \{  p,q \} = (I \setminus \{ p,q \}) \cup (J \setminus \{ p\})$. Note that $I \setminus \{  p,q\} $ and $ J \setminus \{ p\}$ are labeled order ideals of $ \H{x\vee y} \setminus \{  p,q \}$. By applying Proposition \ref{removemaxprop} part (4) and then applying Proposition \ref{removeminprop} part (4), we deduce that $ \H{s(x\vee y)s} \cong \H{x\vee y} \setminus \{ p,q \} $. Similarly, $ \H{sxs} \cong I \setminus \{ p,q \}$. By applying Proposition \ref{removemaxprop} part (4), we also deduce that $ \H{ys} \cong J \setminus \{ p \}$. Hence, we can apply Proposition \ref{joinidealprop} to conclude that $ s(x\vee y)s = (sxs) \vee (ys)$. But note that $ s(x\vee y)s$ is a fully commutative element where $sxs \weakr s(x\vee y)s $, $ ys \weakr s(x\vee y)s$, $ (sxs)^{2}  =1$, $ (ys)^{2} = 1$, $ \ell(sxs)+ \ell(ys) = \ell(x) - 2 + \ell(y) -1 = n+1 -3 = n-2< n+1$. Hence, we can apply induction to conclude that $ (s(x\vee y)s)^{2} = 1$, which implies that $ (x\vee y)^{2} = 1$.

    \textbf{Case 3: $p \in J$ and $ p$ is not maximal in $J$}

    Suppose now that $ p\in J$ and $ p$ is not maximal in $ J$. Because $p$ is minimal in $ \H{x\vee y}$, $ p$ is also minimal in $J$ since $J$ is a labeled order ideal of $ \H{x\vee y}$. Since $ p$ is minimal in $J$ with $ \H{y} \cong J$, it follows from Proposition \ref{removeminprop} part (1) that $s \in D_{L}(y) $. But note that because $y^{2}  =1$, it follows that $ D_{R}(y) = D_{L}(y)$. Thus, $ s \in D_{R}(y)$. But by Proposition \ref{removemaxprop} part (1), this implies that there exists some vertex $ z \in J \cong \H{y}$ such that $ z$ is maximal in $J$ and $ z$ has $S$-label $s$. Furthermore, $ z\neq p$ since $ p\in J$ is minimal but not maximal in $J$. Note that since $ z$ and $ q$ have the same $ S$-label in $ \H{x\vee y}$, it follows from Proposition \ref{mustbecompprop} that $ z \leq_{x\vee y} q$ or $ q \leq_{x\vee y} z$. But since $ q\in \H{x\vee y}$ is maximal, we conclude that $ z \leq_{x\vee y} q$. Since $ q\in I$ and $I$ is a labeled order ideal of $ \H{x\vee y}$, we conclude that $ z\in I$. Note that if $ z = q$, then one has $ \H{x\vee y} \setminus \{ p,q \} = (I \setminus \{  p,q \}) \cup (J \setminus \{  p,z \} ) $. If $ z\neq q$, then $ z <_{x\vee y} q$, and because $ z$ is maximal in $J$, we have that $ q\notin J$. Furthermore, because $ q\in I$, we still have $ \H{x\vee y} \setminus \{ p,q \} = (I \setminus \{ p,q \} ) \cup (J \setminus \{ p,z \}) $. Thus, we have shown that regardless of whether $ z=q$ or $ z\neq q$, we have $ \H{x\vee y} \setminus \{ p,q \} = (I \setminus \{ p,q \} ) \cup (J \setminus \{ p,z \}) $. Note that $I \setminus \{ p,q \} $ and $ J \setminus \{ p,z \}$ are labeled order ideals of $ \H{x\vee y} \setminus \{ p,q \}$. One can use Proposition \ref{removemaxprop} part (4) and Proposition \ref{removeminprop} part (4) to conclude that $ \H{s(x\vee y)s} \cong \H{x\vee y}\setminus \{ p,q \}$, $ \H{sxs} \cong I \setminus \{ p,q \}$, and $ \H{sys} \cong J \setminus \{  p,q \}$. Hence, one can Proposition \ref{joinidealprop} to deduce that $ s(x\vee y)s = (sxs)\vee (sys)$. But note that $ s(x\vee y)s$ is a fully commutative element where $ sxs \weakr s(x\vee y)s$, $ sys\weakr s(x\vee y)s$, $ (sxs)^{2} = 1$, $ (sys)^{2}=1$, and $ \ell(sxs)+\ell(sys) = \ell(x) - 2 + \ell(y) -2 = n+1 - 4 = n-3 < n+1$. Hence, we can apply induction to conclude that $ (s(x\vee y)s)^{2} = 1$, and thus $ (x\vee y)^{2} = 1$.

    The proof of the three cases above along with the proof of the case when $ \H{x\vee y}$ is composed solely of minimal elements exhausts all possible cases. Hence, we deduce that $ (x\vee y)^{2} = 1$ in general, which concludes that proof of the proposition.

\end{proof}

\begin{proof}[Proof of Theorem \ref{solvedconj}]
    Let $ (W,S)$ be a Coxeter system, and let $ w\in W$ be fully commutative such that $ w\neq 1$. Suppose for the sake of contradiction that there exist two distinct non-trivial involutions of maximal length $ x, y\in [1,w]$. Because $ x,y\in [1,w]$, it follows that $ x\weakr w$ and $ y\weakr w$. Hence, we can apply Proposition \ref{joinisinvolution} to conclude that $ x\vee y$ exists, $ x\vee y \weakr w$, and that $ x\vee y$ is an involution. But since $ x\neq y$ and $ \ell(x) = \ell(y)$, we deduce that $ \ell(x\vee y) > \ell(x) = \ell(y)$. Hence, $ x\vee y$ is an involution in $ [1,w]$, and $\ell(x\vee y)> \ell(x) = \ell(y)$. This contradicts the property that $ x$ and $ y$ are involutions of maximal length in $ [1,w]$. Therefore, any non-trivial involution of maximal length in $ [1,w]$ must be unique, and hence $ w$ satisfies the ancestor property.
\end{proof}

\begin{defi}
Let $  (W,S)$ be a Coxeter system, and let $ w\in W$ be any element. Define $ N(w) : = \{ t\in T \mid \ell(tw) < \ell(w)  \}$.
\end{defi}

\begin{prop} \label{weakrequivreflcocycle}
    Let $ (W,S)$ be a Coxeter system, and let $ x,y\in W$ be any two elements. Then:

    $$ x\weakr y \textrm{ if and only if } N(x) \subseteq N(y)$$
\end{prop}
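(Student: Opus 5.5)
We prove the two implications separately. The tools are the cocycle identity $N(xy) = N(x)\,\Delta\, xN(y)x^{-1}$ from Theorem \ref{dyerthm}, the standard fact that $|N(w)| = \ell(w)$, and the fact that $\ell(xy) = \ell(x)+\ell(y)$ if and only if $N(x) \cap xN(y)x^{-1} = \emptyset$.

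For the forward direction, suppose $x \weakr y$. Then we can write $y = xz$ with $\ell(y) = \ell(x) + \ell(z)$. The cocycle identity gives $N(y) = N(x)\,\Delta\, xN(z)x^{-1}$. Since $|N(w)| = \ell(w)$ for all $w$, and conjugation preserves cardinality, we get
$$\ell(x)+\ell(z) = |N(y)| \le |N(x)| + |xN(z)x^{-1}| = \ell(x)+\ell(z).$$
Equality forces the symmetric difference to be a disjoint union. Hence $N(y) = N(x) \sqcup xN(z)x^{-1}$, and in particular $N(x) \subseteq N(y)$. The identity $|N(w)| = \ell(w)$ I will justify from Dyer's description $N(w)=\{t\mid \ell(tw)<\ell(w)\}$: for a reduced word $s_1\cdots s_n$, the reflections $s_1\cdots s_{i-1}s_i s_{i-1}\cdots s_1$ are distinct, and they are exactly the left inversions of $w$. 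This is a standard fact I would simply cite (e.g.\ \cite{bjorner2005combinatorics}).

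For the reverse direction, suppose $N(x) \subseteq N(y)$. I will induct on $\ell(x)$; the case $x = 1$ is trivial. Set $z = x^{-1}y$. Then $N(y) = N(x)\,\Delta\, xN(z)x^{-1}$, so
$$xN(z)x^{-1} = N(x)\,\Delta\, N(y) = N(y)\setminus N(x),$$
where the last equality uses $N(x)\subseteq N(y)$. Therefore $|N(z)| = |N(y)| - |N(x)|$, that is, $\ell(z) = \ell(y) - \ell(x)$. So $\ell(y) = \ell(x) + \ell(x^{-1}y)$, which is precisely the definition of $x \weakr y$.

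With this route no induction is actually needed. The only real input is the cardinality identity $|N(w)|=\ell(w)$, and I expect that to be the main point requiring care; everything else is bookkeeping with the cocycle identity. If one wants to avoid citing it, one can prove $|N(w)|=\ell(w)$ by induction on $\ell(w)$. The induction uses $N(ws)=N(w)\,\Delta\,\{wsw^{-1}\}$ together with the exchange-condition fact that $wsw^{-1}\in N(w)$ if and only if $\ell(ws)<\ell(w)$.
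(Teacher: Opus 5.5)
Your proof is correct, but it differs from the paper, which gives no argument of its own and simply cites Proposition 3.1.3 of Bj\"orner--Brenti. The textbook argument there works with reduced words and descents: the forward direction uses the reduced-word description of the left inversion set, and the converse peels off a common left descent inductively.

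You instead get both directions at once from the cocycle identity of Theorem \ref{dyerthm} and the count $|N(w)|=\ell(w)$. With $z=x^{-1}y$, the identity $N(y)=N(x)\,\Delta\,xN(z)x^{-1}$ and $|A\,\Delta\,B|=|A|+|B|-2|A\cap B|$ show that $\ell(y)=\ell(x)+\ell(z)$ holds exactly when $N(x)$ and $xN(z)x^{-1}$ are disjoint. The converse follows by solving $xN(z)x^{-1}=N(x)\,\Delta\,N(y)=N(y)\setminus N(x)$ and counting.

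What your route buys:
\begin{itemize}
\item it is self-contained within the cocycle framework the paper already adopts (and reuses in Theorem \ref{gimenezracsthm});
\item it needs no induction;
\item it yields the sharper decomposition $N(y)=N(x)\sqcup xN(x^{-1}y)x^{-1}$ whenever $x\weakr y$.
\end{itemize}
The price is that it leans on Dyer's cocycle identity, which is itself usually proved via the exchange property. So your argument is more compact than the textbook one given the paper's setup, not more elementary.

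Two cosmetic remarks. First, drop the sentence announcing an induction on $\ell(x)$, since you never use it. Second, in your sketch of $|N(w)|=\ell(w)$, the equivalence $wsw^{-1}\in N(w)\iff \ell(ws)<\ell(w)$ is not an exchange-condition fact. It is immediate from the definition of $N$, because $(wsw^{-1})w=ws$. What the induction does need is $\ell(ws)=\ell(w)\pm 1$.
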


\begin{proof}
    See Proposition 3.1.3 of \cite{bjorner2005combinatorics}.
\end{proof}

\begin{defi}
    Let $ (W,S)$ be a Coxeter system, and let $ w\in W$ be fully commutative. A \emph{principle labeled order ideal} $I$ of $ \H{w}$ is a labeled order ideal such that there exists a vertex $ q \in \H{w}$ where

    $$ I : = \{ p \in \H{w} \mid  p \leq_{w} q  \}$$
    If $ q\in \H{w}$, we let $ I(q) \subseteq \H{w}$ denote the principle labeled order ideal defined by $ q\in \H{w}$.
\end{defi}

\begin{prop} \label{welldefreflection}
    Let $ (W,S )$ be a Coxeter system, and let $w\in W$ be fully commutative. Let $ q\in \H{w}$. Consider the principle labeled order ideal $I(q) \subseteq \H{w}$. Let $ x\in W$ be the unique element such that $ x \weakr w$ and $ \H{x} \cong I(q)$ (Proposition \ref{existscorrprop}). Then there is a well-defined reflection $ t\in T$ such that:

    $$ t = s_{1}s_{2} \dots s_{k-1}s_{k}s_{k-1}\dots s_{2}s_{1}$$
    where $ \textbf{\underline{x}} = (s_{1},s_{2}, \dots, s_{k})$ is some reduced expression for $ x$.
\end{prop}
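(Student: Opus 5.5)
We have to show that the element $t = s_{1}\cdots s_{k-1}s_{k}s_{k-1}\cdots s_{1}$ is independent of the chosen reduced expression $\textbf{\underline{x}}$ of $x$. It is clearly a reflection, being conjugate to $s_k$. The plan is to identify $t$ intrinsically.

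\textbf{Step 1: $x$ has a unique right descent.} Since $I(q)$ is a principal labeled order ideal, $q$ is its unique maximal vertex. Hence by Proposition \ref{removemaxprop} part (1) applied to $\H{x} \cong I(q)$, we get $D_{R}(x) = \{s\}$, where $s$ is the $S$-label of $q$. Every reduced expression of $x$ must end in a right descent of $x$. Therefore any reduced expression $(s_{1},\dots,s_{k})$ of $x$ has $s_{k} = s$, and $x s_k = xs$ is the same element regardless of the expression chosen.

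\textbf{Step 2: rewrite $t$ intrinsically.} For any reduced expression we have
$$ t = (s_{1}\cdots s_{k})(s_{k-1}\cdots s_{1}) = x\,(s_{1}\cdots s_{k-1})^{-1} = x (xs)^{-1} = x s x^{-1}.$$
By Step 1, $s$ depends only on $x$, and $x$ is uniquely determined by $q$ through Proposition \ref{existscorrprop}. So $t = xsx^{-1}$ depends only on $q$ and is well defined.

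For an alternative, more intrinsic description, the cocycle identity gives $N(x) = N(xs) \,\Delta\, \{xsx^{-1}\}$, and $\ell(xs) < \ell(x)$. So $t$ is the unique element of $N(x)\setminus N(xs)$. This description is not needed for the proof.

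\textbf{Main point to handle.} The only step requiring care is Step 1, namely that the last letter of every reduced expression of $x$ is the label of $q$. This rests on $I(q)$ having the single maximal element $q$, together with Proposition \ref{removemaxprop} parts (1) and (3).
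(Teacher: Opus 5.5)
Your proof is correct and follows essentially the same route as the paper: both use Proposition \ref{removemaxprop} part (1) to see that $D_{R}(x)$ is the singleton consisting of the $S$-label $s$ of $q$, so every reduced expression of $x$ ends in $s$. After that, $t$ is determined — the paper by noting $s_{1}\cdots s_{k-1} = r_{1}\cdots r_{k-1}$, you by writing $t = xsx^{-1}$. Your appeal to part (3) is unnecessary, since $q$ is the only maximal vertex of $I(q)$.
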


\begin{proof}
    Consider two reduced expressions for $ x$: $ (s_{1},s_{2}, \dots, s_{k})$ and $ (r_{1}, r_{2}, \dots , r_{k})$. Hence, we have that $ s_{1}s_{2}\dots s_{k} = x = r_{1}r_{2} \dots r_{k}$. But note that since $ q \in \H{x} \cong I(q)$ is the unique maximal element of $ \H{x}$, it follows that $ D_{R}(x)$ is a singleton set (Proposition \ref{removemaxprop} part (1)), and hence $ s_{k} = r_{k}$. But if $ s_{k} = r_{k}$ and $ s_{1}s_{2}\dots s_{k} = r_{1}r_{2} \dots r_{k}$, we conclude that $ s_{1}s_{2}\dots s_{k-1} = r_{1}r_{2} \dots r_{k-1}$. Hence:

    $$ s_{1}s_{2} \dots s_{k-1}s_{k}s_{k-1} \dots s_{2}s_{1} = s_{1}s_{2} \dots s_{k-1}r_{k}s_{k-1} \dots s_{2}s_{1}$$

    $$ = r_{1}r_{2} \dots r_{k-1}r_{k}r_{k-1}\dots r_{2}r_{1}$$
    Hence, $ t := s_{1}s_{2} \dots s_{k-1}s_{k}s_{k-1}\dots s_{2}s_{1} =  r_{1}r_{2} \dots r_{k-1}r_{k}r_{k-1}\dots r_{2}r_{1}$ is well-defined.
\end{proof}

\begin{prop} \label{racsreflbijection}
    Let $ (W,S)$ be a right-angled Coxeter system, and let $ w\in W$ be any element (note that $ w$ is fully commutative since we are working in a right-angled Coxeter system). Then there is a bijection between the set of principle labeled order ideals of $ \H{w}$ and the set $ N(w)$ given by the following map:

    $$ I(q) \longmapsto t \in N(w) $$
    where $ t\in T$ is the reflection given in Proposition \ref{welldefreflection}.
\end{prop}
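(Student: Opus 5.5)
We prove the statement by comparing two sets of the same size, each in bijection with the vertices of $\H{w}$. Fix a reduced expression $\textbf{\underline{w}} = (s_{1}, \dots, s_{n})$ for $w$; the heap is built on this expression, with vertex $i$ carrying label $s_{i}$. Recall the standard fact that the reflections
$$ t_{i} := s_{1}s_{2}\dots s_{i-1}s_{i}s_{i-1}\dots s_{1}, \quad 1\le i\le n, $$
are pairwise distinct and that $N(w) = \{t_{1},\dots,t_{n}\}$, so $|N(w)| = \ell(w) = |\H{w}|$. The principal labeled order ideals are indexed by vertices, since $q \mapsto I(q)$ is injective ($q$ is the unique maximal element of $I(q)$). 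So both sets have cardinality $n$, and it suffices to show two things: the map $I(q)\mapsto t$ lands in $N(w)$, and it is injective. In fact we will show directly that it sends $I(q)$ for $q=i$ to $t_{i}$.

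\textbf{Step 1 (well-definedness and membership in $N(w)$).} Let $q=i$ and let $x$ be the element with $\H{x}\cong I(i)$ and $x\weakr w$. We build a reduced expression for $x$ as follows. Delete from $(s_{1},\dots,s_{i})$ every index $j<i$ with $j\not\le_{\textbf{\underline{w}}} i$. The remaining indices form exactly $I(i)$, listed in increasing order, and this list is a linear extension of $\leq_{\textbf{\underline{w}}}$ restricted to $I(i)$. It therefore gives a reduced word for $x$ ending in $s_{i}$: its heap is the induced subheap $I(i)$, and by Proposition \ref{existscorrprop} together with part (5) of Proposition \ref{removemaxprop} it represents $x$.

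Next we show that the reflection of Proposition \ref{welldefreflection} computed from this word equals $t_{i}$. Each deleted $j$ satisfies $j \not\le i$, so no kept index $k$ with $j<k\le i$ has $k\ge j$ in the heap order. Moving the deleted letters rightward past the later kept letters, starting from the largest deleted index, shows that $s_{j}$ commutes with every later kept letter ($m=2$). Hence $s_{1}\cdots s_{i-1} = u\cdot v$, where $u$ is the product of the kept letters and $v$ is the product of the deleted letters, and $v$ commutes with $s_{i}$ (again because $j\not\le i$ forces $m(s_{j},s_{i})=2$). Therefore
$$ t_{i} = u v s_{i} v^{-1}u^{-1} = u s_{i} u^{-1}, $$
which is exactly the reflection attached to $I(i)$. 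Consequently $t \in N(w)$.

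\textbf{Step 2 (injectivity).} Distinct vertices $i$ are sent to the distinct reflections $t_{i}$. This uses only distinctness of the $t_{i}$, which holds because the word is reduced (the standard Björner--Brenti Cor.~1.4.4). Combining Steps 1 and 2 with the equal cardinalities gives the bijection.

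The main obstacle is Step 1: the commutation bookkeeping showing that deleting the non-ideal letters does not change the conjugated reflection. The argument relies on the heap order being the transitive closure of $\prec$ and on downward-closure. Right-angledness is not essentially needed here beyond full commutativity; it matters only in that every $w$ is fully commutative, so every $w$ has a heap.
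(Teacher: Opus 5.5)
Your proof is correct, but it takes a different route from the paper's. You fix one reduced word $(s_{1},\dots,s_{n})$ for $w$. You then commute each letter $s_{j}$ with $j<i$ and $j\not\leq_{\textbf{\underline{w}}} i$ past the later letters of $I(i)$ and past $s_{i}$. This shows that the reflection attached to $I(i)$ is exactly the $i$-th left inversion $t_{i}=s_{1}\cdots s_{i-1}s_{i}s_{i-1}\cdots s_{1}$ of that word. Bijectivity is then just the standard fact that $N(w)=\{t_{1},\dots,t_{n}\}$ with the $t_{i}$ pairwise distinct.

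The paper never identifies the image explicitly. It gets $t\in N(w)$ from $t\in N(x)\subseteq N(w)$ (Proposition \ref{weakrequivreflcocycle}). For injectivity, it notes that $t_{1}=t_{2}$ makes the middle letters conjugate, hence equal because $(W,S)$ is right-angled. So $q_{1},q_{2}$ share a label and are comparable by Proposition \ref{mustbecompprop}. This nests the ideals and places both reflections in a single reduced word for $x_{2}$, where the resulting two-letter deletion is a contradiction (the distinctness of the $t_{i}$ in disguise). Surjectivity then follows by counting $|N(w)|=\ell(w)=|\H{w}|$.

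Your approach gives an explicit description of the bijection (vertex $i\mapsto t_{i}$). As you observe, it uses nothing beyond full commutativity, so it proves the statement for fully commutative elements of any Coxeter system. The paper's approach avoids the commutation bookkeeping, but it needs the right-angled hypothesis precisely to force the two vertices to be comparable.

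One presentational point: you assert that the subword indexed by $I(i)$ is reduced before justifying it. The justification is actually your second paragraph. The commutations turn $(s_{1},\dots,s_{i})$ into the kept letters followed by the deleted ones, so the kept letters form a reduced prefix of a reduced word for $w$. Its heap is the induced subheap $I(i)$, since chains between elements of an order ideal stay inside it. Proposition \ref{existscorrprop} (or part (5) of Proposition \ref{removemaxprop}) then identifies that element with $x$.
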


\begin{proof}
    By Proposition \ref{welldefreflection}, let $ x\weakr w$ such that $ \H{x} \cong I(q)$, and let $ \textbf{\underline{x}} = (s_{1},s_{2}, \dots , s_{k})$ be a reduced expression. Then $ t = s_{1}s_{2} \dots s_{k-1}s_{k}s_{k-1} \dots s_{2}s_{1}$. First, note that a quick calculation shows that $ tx = s_{1}s_{2} \dots s_{k-1}$, and hence $ \ell(tx) < \ell(x)$. Therefore, $ t\in N(x)$. But note that because $ x \weakr w$, Proposition \ref{weakrequivreflcocycle} implies that $ N(x) \subseteq N(w)$. Therefore, $ t\in N(w)$, and this shows that the codomain of the map $ I(q) \longmapsto t$ lies in $ N(w)$. We just need to show that the map $ I(q) \longmapsto t$ is a bijection between the set of principle labeled order ideals of $ \H{w}$ and the set $ N(w)$.

    Let $ q_{1},q_{2} \in \H{w}$. Let $ t_{1}, t_{2} \in N(w)$ be the reflections corresponding to $ I(q_{1})$ and $ I(q_{2})$ respectively. Let $ x_{1}, x_{2} \weakr w$ such that $ \H{x_{1}} \cong I(q_{1})$ and $ \H{x_{2}} \cong I(q_{2})$. Let $ \textbf{\underline{x}}_{1} = (r_{1},r_{2} , \dots , r_{m})$ and $ \textbf{\underline{x}}_{2} = (s_{1}, s_{2}, \dots , s_{n})$ be reduced expressions. Then:

    $$ t_{1} = r_{1}r_{2} \dots r_{m-1}r_{m}r_{m-1} \dots r_{2}r_{1}$$
    and

    $$ t_{2} = s_{1}s_{2} \dots s_{n-1}s_{n}s_{n-1} \dots s_{2}s_{1}$$
    I claim that the map $ I(q) \longmapsto t$ is injective. To prove this, suppose that $ t_{1} = t_{2}$. Note that $ t_{1} = t_{2}$ implies that the simply reflections $ r_{m}$ and $ s_{n}$ are conjugate. But by the exercise at the end of Section 5.3 of \cite{humphreys1992reflection}, two simple reflections are conjugate if and only if there exists a path of edges whose labels are odd integers $ \geq 3$ in the Coxeter diagram of $(W,S)$. Since $ (W,S)$ is a right-angled Coxeter system, no edges with odd labels $\geq 3$ occur in the Coxeter diagram of $(W,S)$. Hence, we deduce that $ r_{m} = s_{n}$. But note that $ r_{m}$ and $ s_{n}$ are the $S$-labels of $ q_{1}$ and $q_{2}$ respectively. Hence, $ q_{1}$ and $ q_{2}$ have the same $S$-label $ s: = r_{m} = s_{n}$. If $ q_{1} = q_{2}$, then $ I(q_{1}) = I(q_{2})$, and we conclude that the map is injective. Suppose for the sake of contradiction that $ q_{1} \neq q_{2}$. Note that because $ q_{1}$ and $ q_{2}$ have the same $S$-label $s$, it follows from Proposition \ref{mustbecompprop} that either $ q_{1} <_{w} q_{2}$ or $ q_{2} <_{w} q_{1}$. Without loss of generality, let us assume that $ q_{1} <_{w} q_{2}$. But because $ q_{1} <_{w} q_{2}$, we deduce that $ I(q_{1}) \subsetneq I(q_{2})$. Because $I(q_{1}) \subsetneq I(q_{2}) $, where $ \H{x_{1}} \cong I(q_{1})$ and $ \H{x_{2}} \cong I(q_{2})$, it follows from Proposition \ref{equividealprop} that $ x_{1} \sweakr x_{2}$. But since $ x_{1} \sweakr x_{2}$, we may assume that $ r_{i} = s_{i}$ for $ i=1,2, \dots, m$ and that $ m< n$. Hence, 

    $$ \textbf{\underline{x}}_{2}^{\ast} = (r_{1}, r_{2}, \dots , r_{m}, s_{m+1}, \dots, s_{n})$$
    is a reduced expression for $ x_{2}$ and 

    $$ t_{2} =  r_{1}r_{2} \dots  r_{m}s_{m+1} \dots s_{n-1}s_{n}s_{n-1} \dots s_{m+1}r_{m} \dots r_{2}r_{1}$$
    But note that because $ t_{1} = t_{2}$, we have that

    $$ x_{2} = t_{1}t_{2}x_{2} = t_{1}t_{2} r_{1}r_{2} \dots r_{m}s_{m+1} \dots s_{n}$$

    $$ =  t_{1} r_{1}r_{2} \dots r_{m}s_{m+1} \dots s_{n-1}$$

    $$ = r_{1}r_{2} \dots \widehat{r_{m}}s_{m+1} \dots s_{n-1}$$
    where the $\widehat{r_{m}}$ denotes omission of $ r_{m}$. But the conclusion $ x_{2} = r_{1}r_{2} \dots \widehat{r_{m}}s_{m+1} \dots s_{n-1}$ is a contradiction, since we have produced an expression for $ x_{2}$ whose length is strictly less than the length of the reduced expression $  \textbf{\underline{x}}_{2}^{\ast}$. Thus, we must have that $ I(q_{1}) = I(q_{2})$. This proves that the map $ I(q) \longmapsto t\in N(w)$ is injective. 

    To see that the injective map $ I(q) \longmapsto t\in N(w)$ is actually a bijection, note that the set of principle labeled order ideals of $ \H{w}$ has cardinality $ |\H{w}|$, and $ |\H{w}| = \ell(w)$. It is a general fact that $ |N(w)| = \ell(w)$. Thus, since the set of principle labeled order ideals of $ \H{w}$ has the same finite cardinality as the set $ N(w)$, and since the map $ I(q) \longmapsto t\in N(w)$ is an injection between these two sets, we conclude that the map is a bijection

\end{proof}

\begin{prop} \label{reflwordreduced}
    Let $ (W,S)$ be a right-angled Coxeter system, and let $ w\in W$. Let $ I(q) \subseteq \H{w}$ denote a principal labeled order ideal. Let $ t\in N(w)$ denote the reflection corresponding to $ I(q)$ by Proposition \ref{racsreflbijection}. Let 

    $$ t = s_{1}s_{2} \dots s_{k-1}s_{k}s_{k-1} \dots s_{2}s_{1}$$
    be as in Proposition \ref{welldefreflection} where $ x\weakr w$, $ \H{x} \cong I(q)$, and $ \textbf{\underline{x}} = (s_{1},s_{2}, \dots , s_{k})$ is some reduced expression. Then

    $$ \textbf{\underline{t}} = (s_{1},s_{2}, \dots, s_{k-1},s_{k},s_{k-1}, \dots ,s_{2},s_{1})$$
    is a reduced expression.
\end{prop}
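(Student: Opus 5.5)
Our goal is to show that $\ell(t) = 2k-1$ when $x = s_{1}\cdots s_{k}$ is reduced and $\H{x} \cong I(q)$ has the unique maximal vertex $q$ with label $s_{k}$. The approach is to show that the word $\textbf{\underline{t}}$ cannot be shortened. We use the standard fact that an expression is non-reduced if and only if some deletion of two letters gives the same element (deletion condition). Equivalently, the word is reduced if and only if the reflections
$$ t_{i} = u_{1}u_{2}\cdots u_{i-1}u_{i}u_{i-1}\cdots u_{1}, \qquad i = 1,\dots,2k-1, $$
attached to the letters $u_{i}$ of $\textbf{\underline{t}}$ are pairwise distinct. The first $k$ of these are the reflections of $N(x^{-1})$, and they are distinct because $(s_{1},\dots,s_{k})$ is reduced. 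So the content of the argument is to show that the remaining reflections are distinct from these and from each other.

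A cleaner route is to show directly that $x s_{k-1} \cdots s_{1}$ is reduced as the product $x \cdot (s_{1}\cdots s_{k-1})^{-1}$. Write $x' = s_{1}\cdots s_{k-1}$, so $x = x' s_{k}$ and $t = x' s_{k} x'^{-1}$. We have $\ell(t) = \ell(x') + 1 + \ell(x'^{-1})$ precisely when $N(x') \cap N(x' s_{k} x'^{-1}\cdots)$ behaves additively. Concretely, with the cocycle identity $N(ab) = N(a)\,\Delta\, aN(b)a^{-1}$, the length is additive if and only if $N(x)$ and $x N(x'^{-1}) x^{-1}$ are disjoint. The reflections in $N(x'^{-1})$ are $s_{k-1}, s_{k-1}s_{k-2}s_{k-1}, \dots$, and conjugating by $x$ gives $x' s_{k} s_{k-1}\cdots s_{j}\cdots s_{k-1}s_{k} x'^{-1}$. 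So we need to show that none of these equals an element of $N(x)$.

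I would instead argue by the heap. Since $q$ is the unique maximal vertex and everything is below it, every vertex lies on a chain to $q$. In a right-angled system, a key observation is that in $\H{x}$ no other vertex carries the label $s_{k}$ in a position commuting with everything above it. The main step is a reduction by induction on $k$. Let $p$ be a minimal vertex of $I(q)$ with label $s_{1}=s$, and assume $k>1$.

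\textbf{Case A: $s \neq s_{k}$.} Then $sx$ has heap $I(q)\setminus\{p\}$. This is still principal with top $q$, by Proposition \ref{removeminprop} part (4). By induction, $s t s$ has reduced word $(s_{2},\dots,s_{k},\dots,s_{2})$ of length $2k-3$. It remains to show that conjugating by $s$ raises the length by $2$, i.e.\ $s \notin D_{L}(sts)$ and $s \notin D_{R}(sts \cdot s)$. Since $sts$ is an involution, $D_{L} = D_{R}$, so one check suffices. The check is that $s \in D_{L}(sts)$ would force, via the heap of the fully commutative element $sts$ (in a right-angled group every element is fully commutative), a minimal vertex labeled $s$. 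That vertex must come from the $(s_{2},\dots,s_{2})$ word and commute with everything before it. But $p \lessdot$ some vertex on the way to $q$ that does not commute with $s$, since $p<q$ and $p \neq q$. Hence $m(s, r) = \infty$ for some letter $r$ in $(s_{2},\dots,s_{k})$ that precedes every $s$-occurrence in that word, when the word is read via the heap structure. This is the step I expect to be the main obstacle: making precise, using Proposition \ref{mustbecompprop} and the heap of the palindromic word, that no $s$-labelled vertex is minimal in $\H{sts}$.

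\textbf{Case B: $s = s_{k}$.} Then $p$ and $q$ share a label, and $p<q$ is forced.

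The argument above only gave the heap of $sts$ a guaranteed blocker for Case A; Case B and the precise minimality claim are left to be checked, with induction on $k$ handling the base case $k=1$, $t = s_{1}$, trivially.
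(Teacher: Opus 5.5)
Your proposal does not yet contain a proof. The reflection-sequence and cocycle reformulations are set up and then dropped. In the heap induction, the decisive step (that no $s$-labelled vertex of $\mathcal{H}(sts)$ is minimal) is explicitly left open, and Case B ($s=s_k$) gets no argument beyond $p<q$.

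Moreover, the reduction that leads to that step is false as stated. Put $y=sts$. To get $\ell(t)=\ell(y)+2$ you need both $s\notin D_L(y)$ and $s\notin D_R(sy)$. Your condition $s\notin D_R(sts\cdot s)$ is equivalent to $s\in D_R(sts)$, so it cannot be what you mean. From $D_L(y)=D_R(y)$ you only get $s\notin D_R(y)$. For an involution $y$ with $s\notin D_L(y)$, the standard dichotomy gives only $\ell(sys)\in\{\ell(y),\ell(y)+2\}$, with $\ell(sys)=\ell(y)$ exactly when $sy=ys$ (e.g.\ whenever every letter of $y$ commutes with $s$). So ``one check suffices'' is wrong: the commuting case must be ruled out separately, and that requires a letter of $y$ that does not commute with $s$.

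The missing ingredient is an explicit blocker, and it disposes of both cases at once.
\begin{itemize}
\item Let $j>1$ be the least index with $m(s_j,s)\neq 2$. It exists because $p<q$.
\item $s_j\neq s$: otherwise $s_1$ could be commuted next to $s_j$, contradicting reducedness of $\textbf{\underline{x}}$. Hence $m(s_j,s)=\infty$.
\item Every occurrence of $s$ in $(s_2,\dots,s_k,\dots,s_2)$, including the middle letter when $s=s_k$, lies strictly after position $j$. By palindromic symmetry it also lies strictly before the mirror image of position $j$.
\item In a right-angled system the only braid moves are commutations, and these never change the relative order of two letters $u,v$ with $m(u,v)\neq 2$. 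So neither outer $s$ can ever become adjacent to another $s$.
\item Any two equal inner letters stay separated by a letter not commuting with them. Otherwise $(s_2,\dots,s_k,\dots,s_2)$, which is reduced by induction, would admit a nil-move.
\end{itemize}
By Tits' word property (Theorem 3.3.1 of \cite{bjorner2005combinatorics}), the palindrome is therefore reduced. Your induction must be phrased for arbitrary $x$ with $D_R(x)=\{s_k\}$, since $sx$ is in general not below $w$ in the weak right order.

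For comparison, the paper needs no induction. Since $D_R(x)=\{s_k\}$, the letter adjacent to $s_k$ in any reduced word for $x$ or $x^{-1}$ has $m(\cdot,s_k)=\infty$. Hence commutation moves on $\textbf{\underline{t}}$ never cross the middle letter, and any nil-move would have to occur inside one half, contradicting reducedness of $\textbf{\underline{x}}$ or $\textbf{\underline{x}}^{-1}$.
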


\begin{proof}

    Consider $ \textbf{\underline{x}} = (s_{1},s_{2}, \dots , s_{k})$. Note that since $ \H{x} \cong I(q)$, $ s_{k}$ is the $S$-label of the unique maximal element $ q\in I(q)$. Thus, by Proposition \ref{removemaxprop}, it follows that $ D_{R}(x) = \{  s_{k} \}$. Since $ \textbf{\underline{x}} = (s_{1},s_{2}, \dots , s_{k})$ is reduced, applying any braid relation to $ \textbf{\underline{x}} = (s_{1},s_{2}, \dots , s_{k})$ still results in a reduced expression. But since we are working in a right-angled Coxeter system, the only braid relations are the commutation relations. Note that if $ \textbf{\underline{x}}^{\ast} = (r_{1}, r_{2} , \dots , r_{k})$ is \emph{any} reduced expression for $ x$, then $ r_{k} = s_{k}$, $ r_{k-1} \neq s_{k}$, and $ m(r_{k-1}, r_{k}) = \infty$. In particular, since $  m(r_{k-1}, r_{k}) = \infty$, it follows that one can never commute the $(k-1)$-th and $ k$-th entries of \emph{any} reduced expression for $ x$. Since $ \textbf{\underline{x}}^{-1} = (s_{k}, \dots , s_{2}, s_{1})$ is also a reduced expression, one can conclude using similar reasoning that one can never commute the first and second entries of \emph{any} reduced expression for $ x^{-1}$.

    Now suppose for the sake of contradiction that 

    $$ \textbf{\underline{t}} = (s_{1},s_{2}, \dots, s_{k-1},s_{k},s_{k-1}, \dots ,s_{2},s_{1})$$
    failed to be a reduced expression. Note that by Theorem 3.3.1 of \cite{bjorner2005combinatorics}, it follows that there exists a sequence of braid-moves on $\textbf{\underline{t}}$ that allows for a nil-move to occur. Since we are working in a right-angled Coxeter system, the only braid moves are commutation relations. But since $ \textbf{\underline{x}} = (s_{1},s_{2}, \dots , s_{k})$ occurs as a prefix of $ \textbf{\underline{t}}$, it follows that one can never commute the $ (k-1)$-th and $ k$-th entries of $ \textbf{\underline{t}}$. Similarly, since $ \textbf{\underline{x}}^{-1} = (s_{k}, \dots , s_{2}, s_{1})$ is a reduced expression that occurs as a postfix of $ \textbf{\underline{t}}$, it follows that one can never commute the $ k$-th and $(k+1)$-th entries of $ \textbf{\underline{t}}$. Hence, commutation-moves can only occur on the prefix $ (s_{1}, s_{2}, \dots , s_{k-1})$ or on the postfix $(s_{k-1}, \dots , s_{2}, s_{1})$ of $ \textbf{\underline{t}}$. But since there exists a sequence of commutation-moves on $\textbf{\underline{t}}$ that allows for a nil-move to occur, it follows that a nil-move will occur in $(s_{1}, s_{2}, \dots , s_{k-1}) $ or in $ (s_{k-1}, \dots , s_{2}, s_{1})$ after some sequence of commutation-moves. But this would imply that $(s_{1}, s_{2}, \dots , s_{k-1}) $ or $ (s_{k-1}, \dots , s_{2}, s_{1})$ fails to be a reduced expression, which would then imply that $ \textbf{\underline{x}}$ or $ \textbf{\underline{x}}^{-1}$ would fail to be a reduced expression, which is a contradiction. Hence, $ \textbf{\underline{t}}$ must be reduced.
\end{proof}

\begin{prop} \label{reflwordequal}
    Let $ (W,S)$ be a right-angled Coxeter system. Let $ t\in T $. Suppose that 

    $$ \textbf{\underline{t}}_{1} = (s_{1}, s_{2} , \dots s_{k-1}, s_{k}, s_{k-1} , \dots , s_{2}, s_{1}) $$
    and

    $$ \textbf{\underline{t}}_{2} = (r_{1}, r_{2} ,\dots , r_{k-1}, r_{k}, r_{k-1}, \dots , r_{2}, r_{1})$$
    are both palindromic reduced expressions for $ t$. Then $ s_{1}s_{2} \dots s_{k} = r_{1}r_{2} \dots r_{k}$.
\end{prop}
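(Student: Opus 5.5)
Let $x := s_{1}s_{2}\dots s_{k}$ and $y := r_{1}r_{2}\dots r_{k}$; we want $x=y$. The plan is to view $t$ as a fully commutative element and compare the heaps of $x$ and $y$ inside $\H{t}$. Since $\textbf{\underline{t}}_{1}$ is reduced, $(s_{1},\dots,s_{k})$ is a reduced prefix, so $x \weakr t$, and likewise $y \weakr t$. Moreover $x \in N(t)$-related data: $tx = s_{1}\dots s_{k-1}$, so $D_{R}(x)\ni s_{k}$. Also $\ell(t)=2k-1$ for both expressions, so both have the same $k$, as the statement implicitly assumes.

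First I show that $\H{x}$ has a unique maximal element, so that $\H{x}$ is a principal order ideal. Suppose $r\in D_{R}(x)$ with $r\neq s_{k}$. Then $r$ and $s_{k}$ label distinct maximal vertices of $\H{x}$, so they are incomparable and $m(r,s_{k})=2$. Choose a reduced expression for $x$ ending in $(\dots,r,s_{k})$. The palindrome $(s_{1},\dots,s_{k},\dots,s_{1})$ can be rewritten by commutations, since $x^{-1}$ admits the reversed expression, as $(\dots,r,s_{k},r,\dots)$. Then $r s_{k} r = s_{k}$ shortens it, contradicting reducedness. Hence $D_{R}(x)=\{s_{k}\}$, and by Proposition \ref{removemaxprop} part (1) the heap $\H{x}$ has a unique maximal vertex. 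By Proposition \ref{uniqueidealprop}, $\H{x}$ corresponds to a unique order ideal of $\H{t}$, which is principal, say $I(q_{1})$. The same argument gives $\H{y}\cong I(q_{2})$.

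Next, Proposition \ref{racsreflbijection} applied to $w=t$ sends $I(q_{1})\mapsto s_{1}\cdots s_{k}\cdots s_{1}=t$ and $I(q_{2})\mapsto t$. By the injectivity proved there, $I(q_{1})=I(q_{2})$. Hence $\H{x}\cong\H{y}$, and Proposition \ref{removemaxprop} part (5) gives $x=y$.

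The step most likely to be delicate is showing that $D_{R}(x)$ is a singleton, that is, that reducedness of the palindrome forbids a commuting second descent. The argument above handles this: the commuting descent $r$ can be moved adjacent to the central $s_{k}$ on both sides, producing the cancellation $r s_{k} r = s_{k}$.
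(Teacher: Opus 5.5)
Your proof is correct, but it takes a different route from the paper's. The paper argues directly on words. It gets $r_{k}=s_{k}$ because the two middle letters are conjugate simple reflections, hence equal in a right-angled system. It then asserts that no commutation move can ever cross the middle position. So any chain of commutation moves from $\textbf{\underline{t}}_{1}$ to $\textbf{\underline{t}}_{2}$ acts separately on prefix and suffix, which forces $s_{1}\cdots s_{k-1}=r_{1}\cdots r_{k-1}$. You instead prove $D_{R}(x)=\{s_{k}\}$ via the cancellation $r s_{k} r=s_{k}$. You then realize $\H{x}$ and $\H{y}$ as principal ideals $I(q_{1}),I(q_{2})$ of $\H{t}$ that both map to $t$, and invoke the injectivity half of Proposition \ref{racsreflbijection} with $w=t$. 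That is legitimate, since that proposition does not depend on the present one. Part (5) of Proposition \ref{removemaxprop} then gives $x=y$.

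What your route buys is rigor at the one delicate point. The paper justifies the frozen middle letter only by ``similar reasoning as in the proof of the previous proposition.'' There, however, the singleton descent set came from the hypothesis that $\H{x}$ is a principal ideal. Here it must be derived from the reducedness of the palindrome, and your first step is exactly that derivation. Together with its mirror for $x^{-1}$, it makes the middle vertex of $\H{t}$ comparable to every other vertex, which is what actually pins that letter in place.

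The paper's route, once that gap is filled, is more elementary: it uses only the word property and the conjugacy criterion. It also yields more, namely that \emph{every} reduced word for $t$ has $s_{k}$ in the middle preceded by a reduced word for $s_{1}\cdots s_{k-1}$. Your route places the conjugacy argument and the comparability of equally labeled vertices inside the cited injectivity proof.

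One small citation fix: passing from $D_{R}(x)=\{s_{k}\}$ to a \emph{unique} maximal vertex of $\H{x}$ needs part (3) of Proposition \ref{removemaxprop}, not just part (1).
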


\begin{proof}
    Note that since $ \textbf{\underline{t}}_{1}$ and $ \textbf{\underline{t}}_{2}$ are palindromic expressions for the same element $t$, it follows that $ r_{k}$ and $ s_{k}$ are conjugate simple reflections. But in a right-angled Coxeter system, two simple reflections are conjugate if and only if they are equal. Hence, $ r_{k} = s_{k}$. More generally, in \emph{any} palindromic expression for $t$, the simple reflection in the exact middle entry must be equal to $ s: = r_{k} = s_{k}$. Since we are dealing with palindromic reduced expressions, one can use similar reasoning as in the proof of the previous proposition to conclude that a commutation-move can never occur between the $ (k-1)$-th and $ k$-th entries of $ \textbf{\underline{t}}_{1}$ and $ \textbf{\underline{t}}_{2}$. Similarly, a commutation-move can never occur between the $ k$-th and $(k+1)$-th entries of $ \textbf{\underline{t}}_{1}$ and $ \textbf{\underline{t}}_{2}$. But since $ \textbf{\underline{t}}_{1}$ and $ \textbf{\underline{t}}_{2}$ are both reduced expressions for $t$, it follows that there exists a sequence of commutation-moves that transforms $ \textbf{\underline{t}}_{1}$ to $ \textbf{\underline{t}}_{2}$. But when a commutation-move occurs on $ \textbf{\underline{t}}_{1}$, it must occur either on the prefix $ (s_{1}, s_{2} , \dots s_{k-1})$ or on the postfix $(s_{k-1} , \dots , s_{2}, s_{1}) $. Hence, $ (s_{1}, s_{2} , \dots s_{k-1})$ and $(s_{k-1} , \dots , s_{2}, s_{1}) $ can be transformed via commutation-moves to $(r_{1}, r_{2} ,\dots , r_{k-1}) $ and $(r_{k-1}, \dots , r_{2}, r_{1}) $ respectively. Thus, $ s_{1}s_{2} \dots s_{k-1} = r_{1}r_{2} \dots r_{k-1}$, and since $ r_{k} = s_{k}$, we deduce that

    $$ s_{1}s_{2} \dots s_{k} = r_{1}r_{2} \dots r_{k}$$
\end{proof}

\begin{prop} \label{gimenezthmonedirection}
    Let $ (W,S)$ be a right-angled Coxeter system. Let $ x,y \in W$. Let $ x\wedge y$ denote the meet of $ x$ and $ y$ in the weak right order of $ (W,S)$. Then $ N(x\wedge y) = N(x) \cap N(y)$.
\end{prop}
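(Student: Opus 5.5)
The inclusion $N(x\wedge y)\subseteq N(x)\cap N(y)$ is immediate from Proposition \ref{weakrequivreflcocycle}, since $x\wedge y\weakr x$ and $x\wedge y\weakr y$. So the plan is to prove the reverse inclusion: every $t\in N(x)\cap N(y)$ lies in $N(x\wedge y)$. The strategy is to show that $t$ determines a canonical element $x_t\weakr$ both $x$ and $y$ with $t\in N(x_t)$. Then $x_t\weakr x\wedge y$, and Proposition \ref{weakrequivreflcocycle} gives $t\in N(x_t)\subseteq N(x\wedge y)$.

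The first step constructs $x_t$. Since $t\in N(x)$ and $x$ is fully commutative (right-angled case), Proposition \ref{racsreflbijection} yields a unique principal labeled order ideal $I(q)\subseteq\H{x}$ mapping to $t$. Let $x_t\weakr x$ be the element with $\H{x_t}\cong I(q)$, and let $(s_1,\dots,s_k)$ be a reduced expression for it. Then $t=s_1\cdots s_k\cdots s_1$, $t\in N(x_t)$, and by Proposition \ref{reflwordreduced} this palindromic word is reduced. Doing the same with $y$ gives $y_t\weakr y$ with $\H{y_t}\cong I(q')\subseteq \H{y}$ and a reduced palindromic expression $(r_1,\dots,r_m,\dots,r_1)$ for the same $t$. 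Both palindromes are reduced words for $t$, so they have the same length and $m=k$. Proposition \ref{reflwordequal} then gives $x_t=s_1\cdots s_k=r_1\cdots r_k=y_t$.

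The second step concludes the argument. We now have $x_t\weakr x$ and $x_t=y_t\weakr y$, so $x_t\weakr x\wedge y$, the meet existing because weak order is a meet-semilattice. Since $t\in N(x_t)$, Proposition \ref{weakrequivreflcocycle} gives $t\in N(x\wedge y)$.

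The main obstacle is the identification $x_t=y_t$, which rests on Proposition \ref{reflwordequal}. I expect it to go through directly once the palindromic words are known to be reduced, and Proposition \ref{reflwordreduced} supplies exactly that. The only care needed is confirming that equal length of the two reduced words forces $m=k$, so that Proposition \ref{reflwordequal} applies in its stated form.
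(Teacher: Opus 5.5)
Your proposal is correct and follows the paper's proof essentially step for step. You get the easy inclusion from Proposition \ref{weakrequivreflcocycle}. For $t\in N(x)\cap N(y)$ you pull back principal ideals in $\H{x}$ and $\H{y}$ via Proposition \ref{racsreflbijection}, then use Propositions \ref{reflwordreduced} and \ref{reflwordequal}, with equal palindrome lengths forcing $m=k$, to identify the two resulting elements. You conclude $t\in N(x_t)\subseteq N(x\wedge y)$ just as the paper does.
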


\begin{proof}
    By Proposition \ref{weakrequivreflcocycle}, we have that $ N(x \wedge y) \subseteq N(x)$ and $ N(x\wedge y) \subseteq N(y)$. Hence, $ N(x\wedge y) \subseteq N(x) \cap N(y)$. We just need to show the reverse inclusion.

    Let $ t\in N(x) \cap N(y)$. Since we are working in a right-angled Coxeter system, $ x$ and $ y$ are fully commutative. Thus, since $ t\in N(x)$ and $ t\in N(y)$, one can apply Proposition \ref{racsreflbijection} and Propostion \ref{welldefreflection} to construct principal labeled order ideals $ I(q_{1}) \subseteq \H{x}$ and $ I(q_{2}) \subseteq \H{y}$ such that $ t$ corresponds to $ I(q_{1})$ and $ I(q_{2})$. Let $ z_{1} \weakr x$ and $ z_{2} \weakr y$ be such that $ \H{z_{1}} \cong I(q_{1})$ and $ \H{z_{2}}\cong I(q_{2})$. Let 

    $$ \textbf{\underline{z}}_{1} = (s_{1},s_{2}, \dots, s_{m}) $$
    and

    $$ \textbf{\underline{z}}_{1} = (r_{1}, r_{2}, \dots, r_{n})$$
    denote reduced expressions. Then:

    $$ t= s_{1}s_{2} \dots s_{m-1}s_{m}s_{m-1} \dots s_{2}s_{1}$$
    and

    $$ t= r_{1}r_{2} \dots r_{n-1}r_{n}r_{n-1} \dots r_{2}r_{1}$$
    By Proposition \ref{reflwordreduced}, the expressions 

    $$ (s_{1}, s_{2} , \dots s_{m-1}, s_{m}, s_{m-1} , \dots , s_{2}, s_{1}) $$
    and

    $$ (r_{1}, r_{2} ,\dots , r_{n-1}, r_{n}, r_{n-1}, \dots , r_{2}, r_{1})$$
    are both reduced (and hence $ m=n$). By Proposition \ref{reflwordequal}, one deduces that $ z_{1} =  s_{1}s_{2}\dots s_{m} = r_{1}r_{2} \dots r_{n}  = z_{2}$. Let $ z: = z_{1} = z_{2}$. Since $ z \weakr x$ and $ z\weakr y$, we have that $ z \weakr x\wedge y$. Thus, $ N(z ) \subseteq N(x\wedge y)$. But note that $ t\in N(z) \subseteq N(x\wedge y)$. This shows that $ N(x)\cap N(y) \subseteq N(x\wedge y)$, and thus we have established that $ N(x\wedge y) = N(x) \cap N(y)$.
\end{proof}

\begin{proof}[Proof of Theorem \ref{gimenezracsthm}]
    Suppose $ (W,S)$ is a right-angled Coxeter system. Then Theorem \ref{dyerthm} and Proposition \ref{gimenezthmonedirection} imply that $ N(w) = \{ t\in T \mid \ell(tw) < \ell(w)  \}$ is the unique reflection cocycle associated to $ (W,S)$ and that $ N: W \rightarrow \mathcal{P}(T)$ satisfies the meet intersection condition.

    Suppose now that $ (W,S)$ is a reflection system with reflection cocycle $ N:W \rightarrow \mathcal{P}(T)$ that satisfies the meet intersectio condition. Note that Theorem \ref{dyerthm} implies that $ (W,S)$ is a Coxeter system. Let us suppose for the sake of contradiction that $ (W,S)$ failed to be a right-angle Coxeter system. This would imply that there exists some pair of simple reflections $ r,s\in S$ such that $ m(r,s) \in \N$ and $ m(r,s) \geq 3$. Consider the standard parabolic dihedral subsystem $ (W_{\{ r,s \} }, \{  r,s\})$. Note that the weak right order of a standard parabolic subsystem naturally embeds as an order ideal of the weak right order of the overall Coxeter system. Hence, if $ N: W\rightarrow \mathcal{P}(T)$ has the meet intersection property on all of $ (W,S)$, then $N$ should have the meet intersection property when restricted to $ (W_{\{ r,s \} } , \{ r,s \})$. Let $ m: = m(r,s)$. Since $ m\in \N$,  $ (W_{\{ r,s \} } , \{ r,s \})$ is a finite dihedral Coxeter system. Let $ w_{0}\in W_{\{ r,s \} }$ denote the longest element. Consider $ w_{0}r$ and $ w_{0}s$. Since $m\in \N$ and $m\geq 3$, it follows from a quick calculation that $ N(w_{0}r) \cap N(w_{0}s) = T^{\ast} \setminus \{ r,s \}$ where $ T^{\ast} : = \bigcup_{w\in W_{\{ r,s \} }} w\{ r,s \}w^{-1}$. But note that there cannot exist any $z\in W_{\{ r,s \} }$ such that $ N(z) = T^{\ast} \setminus \{ r,s \}$. This is because $N(z)$ must contain some simple reflection, but $ T^{\ast} \setminus \{  r,s \}$ does not contain any simple reflection. This is a contradiction, and therefore $ m(r,s) \in \{  2,\infty \}$ for all distinct $ r,s\in S$. This proves that $ (W,S)$ is a right-angled Coxeter system. 
    
\end{proof}

\bibliographystyle{abbrv}

\end{document}